\numberwithin{equation}{section}
\newtheorem{theorem}{Theorem}[section]
\newtheorem{proposition}[theorem]{Proposition}
\newtheorem{lemma}[theorem]{Lemma}
\newtheorem{corollary}[theorem]{Corollary}
\theoremstyle{definition}
\newtheorem{definition}[theorem]{Definition}
\newtheorem{example}[theorem]{Example}
\newtheorem{remark}[theorem]{Remark}
\title[Pole-cancellation functions of higher order]{Analytic characterizations of Jordan chains by  pole cancellation functions of higher order}
 \author{Muhamed Borogovac  and Annemarie Luger}
\keywords{Generalized Nevanlinna functions, pole cancellation function, root function, Jordan chain, Pontryagin space}
\subjclass{30Exx 33E20 46C20 47B50}
 \address{Muhamed Borogovac\\
Actuarial Department\\
Boston Mutual Life Insurance Company\\
120 Royall Street \\
Canton, MA 0202, United States \\
Phone 781-770-0317}
\email{Muhamed$_-$Borogovac@bostonmutual.com}
\address{Annemarie Luger\\
Department of Mathematics\\ Stockholm University\\ SE-106 91 Stockholm \\Sweden}
\email{luger@math.su.se}
\begin{document}
\maketitle

\begin{abstract}
In this paper  the analytic characterization of generalized poles of operator valued generalized Nevanlinna functions (including the length of Jordan chains of the representing relation) is completed. In particular, given a Jordan chain { of the representing relation} of length $\ell$, we show that there exists a pole cancellation function of order at least $\ell$, and, moreover, the construction shows that it is of surprisingly simple form.  
\end{abstract}

\section{Introduction}

In this work    certain spectral properties of   operators (or relations) are characterized by analytic properties of corresponding matrix  or operator valued functions. More precisely, the focus lies on Jordan chains of self-adjoint operators in  Pontryagin spaces.  Our result gives a complete answer to a question by  M.G.\ Krein and H.\ Langer, posed in connection with generalized Nevanlinna functions. 

As a motivation we give a short description of two approaches that lead to this question.
Start with an easy example: Let $A$ be a Hermitian matrix, then the resolvent $(A-z)^{-1}$ is a matrix-valued function with poles precisely  at the eigenvalues of $A$. Obviously these poles are of order one, since the Hermitian matrix $A$ cannot have any Jordan chains.

In this paper we are interested in the more general situation, where Jordan chains can appear, as well as it is possible that an eigenvalue is not an isolated point of the spectrum. More precisely, let $\big(\mathcal K,[\,\cdot\,,\,\cdot\,]\big)$ be a Pontryagin space and $A$ a self-adjoint operator (or even relation) in this space\footnote{Note that this property can also be described by the following condition. Let $G$ be a self-adjoint operator in a Hilbert space such that {{$0\in\varrho(G)$ and}} $\sigma(G) \cap{\mathbb R ^-}$ consists of finitely many negative eigenvalues of finite multiplicity only. Then we consider operators or relations $A$ satisfying that $GA$ is self-adjoint in the Hilbert space.}.

We are then interested in the eigenvalues of $A$, in particular, in the structure of the algebraic eigenspaces; i.e.\,Jordan chains. To this end let $\mathcal H$ be a Hilbert space and $\Gamma_0:\mathcal H\to\mathcal K$ a bounded linear operator, denote its adjoint by $\Gamma_0^+$, and define the (matrix or operator valued) function $Q$ by 
\begin{equation}\label{resolvent}
Q(z):=\Gamma_0^+(A-z)^{-1}\Gamma_0,\qquad z\in\varrho({{A}}).
\end{equation}
If in applications $A$ is a differential operator then $\Gamma_0$ might  act on the boundary of the domain.

Instead of investigating $A$ itself, or its resolvent, we aim to describe the properties of Jordan chains (such as lengths and inner products between elements) by the function $Q$ instead. Note that the singularities of $Q$ belong to the spectrum of \nolinebreak $A$, however, in general the converse is not true. In order to assure equality  a  minimality assumption is needed,  see Proposition   \ref{2.2}.

In this   formulation of the problem the focus lies on the operator $A$, whereas the function \nolinebreak$Q$ appears as an auxiliary object. Conversely, one can also focus on functions that essentially arise from the above situation, namely \textit{generalized Nevanlinna functions}, $Q\in\mathcal N_\kappa(\mathcal H)$. These are $\mathcal L(\mathcal H)$-valued functions generalizing the class $\mathcal N_0(\mathbb C)$, which  consists of scalar functions mapping the upper half plane holomorphically into itself, see Definition \ref{def}. 

Recall that such functions admit a realization in a Pontryagin space, see Proposition \ref{2.2}. Essentially this means that for $\alpha\in\mathbb R$ the function $Q\in\mathcal N_\kappa(\mathcal H)$ can be written in the form 
\begin{equation}\label{repH}
 Q(z)=\Gamma_0(A-z)^{-1}\Gamma_0+H(z),
\end{equation}
with some $\Gamma_0\in\mathcal L(\mathcal H,\mathcal K)$ and $H$ is analytic at $\alpha$. 
Hence the question arises how the internal spectral structure of $A$  is reflected in analytic properties of $Q$.

When dealing with such questions the following difficulties appear: An eigenvalue need not be an isolated spectral point and hence it is possible that the corresponding algebraic eigenspace is degenerate. This amounts to the fact that no ``Laurent expansion'' is possible, with this we mean that given $\alpha$ it might be impossible to write $Q$ as 
$$
Q(z)=R(z)+Q_0(z),
$$ 
where $R$ is rational and $Q_0\in\mathcal N_0(\mathcal H)$ is a Nevanlinna function without negative squares, where Jordan chains cannot appear. As an example serves $Q(z)=\frac{\sqrt{z}}{z^2}$. Another major issue is the fact that both $Q$ and $\widehat Q(z):=-Q(z)^{-1}$ might have a generalized pole at the same point. The current paper reveals that the first difficulty, in fact, is only technical, whereas the second is intrinsic.

The problem of characterizing the eigenspace structure of $A$ in \eqref{repH} in terms of analytic properties of $Q$ has  been around from the time when generalized Nevanlinna functions have been introduced, around 40 years ago, cf.\,\cite{KL2}.

For scalar generalized Nevanlinna functions the non-positive part of the Jordan chain was characterized in \cite{L2} whereas as an analytic characterization of the whole chain (in terms of the asymptotic behaviour of $Q$) can be found in  \cite{HL}. The methods there are quite different from our approach. 

For matrix (and operator) generalized Nevanlinna functions the problem is much harder, since the singularities lie only in ``certain directions''. The appropriate tool here 
are so-called \textit{pole cancellation functions}. Essentially this is a vector valued function $\vec\eta$ vanishing at $\alpha$, the point under investigation, such that $Q(z)\vec\eta(z)$ is in some sense regular and does not vanish at the point $\alpha$, see Definitions \ref{def1} and \ref{def2}. 

It has partially been shown earlier, see Remark \ref{rem}, that  the existence of such a pole cancellation function (vanishing of order $\ell$) implies that $A$ has a Jordan chain (of length at least $\ell$).

 Conversely, the construction of a pole cancellation function -- given a Jordan chain -- has appeared to be much more demanding. The results available either make rather restrictive assumptions or cannot cover the whole Jordan chain, and, moreover, all these constructions are very technical and do not help so much in finding a pole cancellation function in a concrete situation, see Remark \ref{rem2} for more history of the problem.

In the present paper we not only show that for a Jordan chain of length $\ell$ there exists a pole cancellation function of order at least $\ell$, but also - not less important -  it is given in a surprisingly simple form, see Theorems \ref{main} and  \ref{final}. In a concrete situation this means that there is only a finite number of parameters that have to be found, when constructing the pole cancellation function.

The article is organized as follows. After this introduction and preliminaries the main results are in Section \ref{S3}. In \ref{ss}  a short collection of results on how the existence of a pole cancellation function implies the existence of a Jordan chain is given.  The core of this text is Section \ref{cc}, where the construction of the pole cancellation function  is done. We also illustrate the sharpness of this result (regarding order and the polynomial form) by  several examples, collected in \ref{examples}. The presentation is completed by a short conclusion.

\section{Preliminaries }

We start this section with the analytic definition of generalized Nevanlinna functions even though in the following mainly an alternative way of describing these functions, namely their realizations, will be used.
Let $\big(\mathcal H,(\,\cdot\,,\,\cdot\,)\big)$ be a Hilbert space and denote by $\mathcal L(\mathcal H)$ the set of bounded linear operators in $\mathcal H$, and let $\mathbb C^+:=\{z\in\mathbb C:{\rm Im} z>0\}$ denote the open upper half plane. 
\begin{definition}\label{def}
 An operator valued  function $Q:\mathcal D(Q)\subset\mathbb C\to\mathcal L(\mathcal H)$ is said to  belong to the \textit{generalized Nevanlinna class}, $\mathcal N_\kappa(\mathcal H)$, if it satisfies the following properties:\vspace{1mm}

\begin{itemize}
 \item $Q$ is meromorphic in $\mathbb C\setminus \mathbb R$,\vspace{1mm}
\item $Q(\overline z)^*=Q(z)$ for all $z\in\mathcal D(Q)$,\vspace{1mm}
\item the Nevanlinna kernel 
$$
N_Q(z,w):=\frac{Q(z)-Q(w)^*}{z-\overline w}\,\quad z,\,w\in\mathcal D(Q)\cap\mathbb C^+
$$
has $\kappa$ negative squares, i.e.\,for arbitrary $n\in\mathbb N,\, z_{1},  \ldots ,
z_{n}\in D(Q)\cap \mathbb C^+$ and $\vec h_{1},,\ldots \vec h_{n}\in \mathcal H$ the Hermitian matrix 
$\left( N_{Q}\left( z_{i},z_{j} 
\right)\vec h_{i},\vec h_{j} \right)_{i,j=1}^{n}$ has at most $\kappa $ 
negative eigenvalues, and  $\kappa$ is minimal with this property. 
\end{itemize}
\end{definition}
It is well known, see eg.\,\cite{KL2} and \cite{HSW},  that these functions can also be described by their realizations. 
\begin{proposition}\label{2.2}
A  function $Q$ with values in $\mathcal L(\mathcal H)$ is a generalized Nevanlinna function if and only if there exist a Pontryagin space $\big(\mathcal K,[\,\cdot\,,\,\cdot\,]\big)$, a self-adjoint relation $A$  in $\mathcal K$, a point $z_0\in\varrho(A)\cap\mathbb C^+$, and a bounded linear map $\Gamma:\mathcal H \to\mathcal K$ such that $Q$ can be written as
\begin{equation}\label{rep}
 Q(z)=Q(z_0)^*+(z-\overline{z_0})\Gamma^+\Big(I+(z-z_0)(A-z)^{-1}\Big)\Gamma\quad \text{for all } z\in\mathcal D(Q).
\end{equation}
Moreover, this realization can be chosen minimal, that is, 
$$
\mathcal K=\overline{span}\big\{\big(I+(z-z_0)(A-z)^{-1}\big)\Gamma\vec h,\,z\in\varrho(A),\,\vec h\in\mathcal H\big\}.
$$
 If the realization is minimal, then $Q\in\mathcal N_\kappa(\mathcal H)$ if and only if the negative index of the Pontryagin space equals $\kappa$. 
\end{proposition}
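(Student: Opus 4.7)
The plan is to prove the two implications separately, and then address minimality together with the identification of $\kappa$ with the negative index of the model space.

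For the sufficiency direction, I would verify the three defining properties in Definition \ref{def} directly from \eqref{rep}. Meromorphy in $\mathbb C\setminus\mathbb R$ follows from the fact that the non-real spectrum of a self-adjoint relation in a Pontryagin space consists of at most finitely many eigenvalues of finite algebraic multiplicity, so $(A-z)^{-1}$ is meromorphic there. The symmetry $Q(\overline z)^*=Q(z)$ follows by taking Pontryagin-adjoints in \eqref{rep} and combining with the resolvent identity
$$
(A-z)^{-1}-(A-w)^{-1}=(z-w)(A-z)^{-1}(A-w)^{-1}.
$$
Applying this identity once more yields a Gram-type factorization of $N_Q(z,w)$ as a product $\Gamma^+F(w)^+F(z)\Gamma$, where $F(z):=I+(z-z_0)(A-z)^{-1}$, from which one reads off that the number of negative squares of $N_Q$ is bounded by the negative index of $\mathcal K$.

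For the necessity direction, I would use the reproducing kernel Pontryagin space (RKPS) construction. Since $N_Q$ has exactly $\kappa$ negative squares, the linear span of formal kernel sections $K_Q(\cdot,w)\vec h$, equipped with the sesquilinear form determined by $N_Q$, descends after factoring out the isotropic part and completing to a Pontryagin space $(\mathcal K,[\,\cdot\,,\,\cdot\,])$ whose negative index is exactly $\kappa$. On this model one defines a candidate linear relation $A$ essentially as ``multiplication by the independent variable'' together with a suitable correction, and $\Gamma$ as the evaluation-type map $\vec h\mapsto K_Q(\cdot,\overline{z_0})\vec h$. A direct computation shows that $A$ is symmetric in the indefinite metric, that $z_0\in\varrho(A)$, and that \eqref{rep} holds on the pre-space; a standard Cayley-transform argument then promotes $A$ to a self-adjoint relation on the completion $\mathcal K$.

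Minimality is obtained by passing to the closed linear span $\mathcal M$ described in the statement: since $\mathcal M$ is invariant under $(A-z)^{-1}$ for $z\in\varrho(A)$ by the resolvent identity and contains the range of $\Gamma$, the compression $A|_{\mathcal M}$ is again self-adjoint and yields the same $Q$. In a minimal realization the natural map from the RKPS model to $\mathcal K$ is an isomorphism of Pontryagin spaces, so the negative index of $\mathcal K$ coincides with $\kappa$. The main obstacle will be the construction step in the necessity direction: one has to be careful that $A$ may only be a linear relation (not an operator), that the candidate shift is well-defined modulo the isotropic part, and that the closure of a symmetric relation in an indefinite inner product space is indeed self-adjoint. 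These are exactly the subtleties that disappear in the positive-definite case $\kappa=0$ and that make the Pontryagin-space version nontrivial.
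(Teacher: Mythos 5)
The paper itself does not prove Proposition \ref{2.2}; it is quoted as a classical result with references to \cite{KL2} and \cite{HSW}, so there is no in-paper argument to compare with. Your sketch follows exactly the standard route from that literature: Gram-type factorization $N_Q(z,w)=\Gamma^+F(w)^+F(z)\Gamma$ for sufficiency, and the reproducing-kernel Pontryagin space model with the difference-quotient relation, Cayley transform, and the completion subtleties for necessity. That part of the outline is sound, and you correctly identify where the genuinely Pontryagin-specific work lies (completion of the pre-space with finite negative index, self-adjointness of a relation rather than an operator).

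There is, however, one step that fails as written: the minimality argument by compression. You claim that for the closed linear span $\mathcal M=\overline{span}\{\Gamma_z\vec h\}$ the restriction $A|_{\mathcal M}$ ``is again self-adjoint''. A closed subspace of a Pontryagin space need not be a Pontryagin space: $\mathcal M$ can be degenerate, and then no self-adjoint compression exists on $\mathcal M$ itself. A concrete example: $\mathcal K=\mathbb C^2$ with Gram matrix $\left(\begin{smallmatrix}0&1\\1&0\end{smallmatrix}\right)$, $A$ the Jordan block $\left(\begin{smallmatrix}\alpha&1\\0&\alpha\end{smallmatrix}\right)$ (self-adjoint in this metric) and $\Gamma\vec h=(h,0)^T$; then every $\Gamma_z\vec h$ is a multiple of the neutral vector $(1,0)^T$, so $\mathcal M$ is a neutral line, while $Q$ is a Hermitian constant whose minimal model is the zero space. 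The correct statement is that the (finite-dimensional, resolvent-invariant) isotropic part $\mathcal M\cap\mathcal M^{[\perp]}$ must be factored out, and the induced relation on the quotient is self-adjoint and realizes the same $Q$; alternatively, in your own scheme the compression step is unnecessary, since the RKPS model you build in the necessity direction is minimal by construction. Relatedly, for the identification of $\kappa$ it is cleaner to avoid invoking an isomorphism with the RKPS model (which presupposes uniqueness of minimal realizations): in a minimal realization the form on the dense span of the vectors $\Gamma_z\vec h$ is given by the kernel $N_Q$ and hence has exactly $\kappa$ negative squares, and the number of negative squares of the inner product on a dense subspace of a Pontryagin space equals its negative index.
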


Recall that a linear relation can be seen as a multi-valued operator, see eg.\,\cite{DS}. 
With the abbreviation 
$$
\Gamma_z:=\big(I+(z-z_0)(A-z)^{-1}\big)\Gamma
$$
the following useful identities hold for $z,\,w\in\varrho(A)$:
\begin{equation}\label{0}
 \Gamma_w=\big(I+(w-z)(A-w)^{-1}\big)\Gamma_z
\end{equation}
\begin{equation}\label{1}
 \Gamma_w ^+\Gamma_z=\Gamma_{\overline z}^+\Gamma_{\overline w}=\frac{Q(z)-Q(\overline w)}{z-\overline w}
\end{equation}
\begin{equation}\label{2}
 (A-z_0)^{-1}\Gamma_z=\frac1{z-z_0}\big(\Gamma_z-\Gamma_{z_0}\big)
\end{equation}
\begin{remark}
 Note that  the minimal realization of a given function $Q$ is unique up to unitary equivalence and in this case $\varrho(A)={\rm hol}(Q)$.
\end{remark}
Hence in what follows, we refer to $A$ in a minimal realization of $Q$ as the \textit{representing relation} of $Q$ and we will be interested in its algebraic eigenspace structure.
\begin{definition}
A point  $\alpha\in\mathbb C\cup\{\infty\}$ is called \textit{generalized pole} of $Q\in\mathcal N_\kappa(\mathcal H)$ if $\alpha$ is an eigenvalue of the representing relation $A$ in a minimal realization of the form \nolinebreak \eqref{rep}.
\end{definition}

\section{Algebraic eigenspace and pole cancellation functions}\label{S3}

The main tool in the analytic characterization of generalized poles (zeros) are so-called pole cancellation functions (root functions), which  also have been used for ``ordinary'' poles (zeros) of  general matrix-valued meromorphic functions, cf.\, {\cite{GSi, GLR}}. 

Definitions and also notions vary in the literature. We give two versions, their relation will become clear by Theorem \ref{theorem-suff}. Here and in the following $z\hat\to\alpha$ denotes the non-tangential limit as $z$ tends to $\alpha\in\mathbb R$, and $\text{w-}\!\lim$ the weak limit, here in the definitions, in the Hilbert space $\mathcal H$.

\begin{definition}\label{def1}
Let $Q\in\mathcal N_\kappa(\mathcal H)$ and $\alpha\in\mathbb R$ be given. Denote by $\mathcal U_\alpha$ a neighbourhood of $\alpha$. A function $\vec\eta:\mathcal U_\alpha\cap\mathbb C^+\to\mathcal H$ is called \textit{pole cancellation function of $Q$ at \nolinebreak$\alpha$} if the following properties are satisfied:\vspace{1mm}
\begin{itemize}
\item[] {\bf (A) } $\text{w-}\!\lim\limits_{z\hat\to\alpha}\vec\eta(z)=\vec0$,\vspace{1mm}
\item[] {\bf (B) } $\text{w-}\!\lim\limits_{z\hat\to\alpha}Q(z)\vec\eta(z)=:\vec\eta_0$ exists and $\vec\eta_0\neq\vec0$,\vspace{1mm}
\item[] {\bf (C) } $\left(\dfrac{Q(z)-Q(z)^*}{z-\overline z}\vec\eta(z),\vec\eta(z)\right)$ is bounded as $z\hat\to\alpha$.\vspace{1mm}
\end{itemize}
We say that  $\vec\eta$ is a \textit{strong pole cancellation function} if, moreover, \vspace{2mm}
\begin{itemize}
\item[]  {\bf (C$_{s}$) } $\lim\limits_{z,w\hat\to\alpha}\left(\dfrac{Q(z)-Q(w)^*}{z-\overline w}\vec\eta(z),\vec\eta(w)\right)$ exists.
\end{itemize}
\end{definition}
As we will see, the existence of a pole cancellation function at a point $\alpha$ is sufficient for $\alpha$ to be a generalized pole of $Q$. However, in order to describe the whole algebraic eigenspace  of the representing relation (not only the eigenvectors) higher order derivatives will be  needed. 

\begin{definition}\label{def2}
A pole cancellation function $\vec\eta$ of $Q$ at $\alpha$ is of \textit{order}  $\ell\in\mathbb N$ if $\ell$ is the maximal number such that for all $0\leq j <\ell$ it holds
\begin{itemize}
\item[] {\bf (D) } $\text{w-}\!\lim\limits_{z\hat\to\alpha}\big(\vec\eta(z)\big)^{(j)}=\vec0$,
\item[] {\bf (E) }  $\text{w-}\!\lim\limits_{z\hat\to\alpha}\big(Q(z)\vec\eta(z)\big)^{(j)}=:\vec\eta_j$ exist and $\vec\eta_0\neq\vec0$,\vspace{1mm}
\item[] {\bf (F) } $\dfrac{d^{2j}}{dz^j\,d\overline z^j}\left(\dfrac{Q(z)-Q(z)^*}{z-\overline z}\vec\eta(z),\vec\eta(z)\right)$ is bounded as $z\hat\to\alpha$.\vspace{2mm}
\end{itemize}
A strong  pole cancellation function  $\vec\eta$ is said to be of \textit{order}  $\ell\in\mathbb N $, if   additionally also the following property is satisfied\vspace{2mm}
\begin{itemize}
\item[] {\bf (F$_{s}$) } $\lim\limits_{z,w\hat\to\alpha}\dfrac{d^{2j}}{dz^j\,d\overline w^j}\left(\dfrac{Q(z)-Q(w)^*}{z-\overline w}\vec\eta(z),\vec\eta(w)\right)$ exist for all $0\leq j <\ell$.
\end{itemize}
\end{definition}

Note that for $j=0$ conditions {\bf (D-F$_{s}$)}  become {\bf (A-C$_{s}$)}.  Also for historical reasons we have separated these two cases.

Let $Q\in\mathcal N_\kappa(\mathcal H)$ and $\alpha\in\mathbb R$ be given. The analytic characterization of the algebraic eigenspace of the representing relation with the help of pole cancellation functions is divided into two sections, corresponding to the two implications that have to be shown.

\subsection{Jordan chains by means of pole cancellation functions}\label{ss}

In this  section we assume that there exists a pole cancellation function of $Q$ at $\alpha$ of order \nolinebreak $\ell$ as introduced in Definitions \ref{def1} and \ref{def2}. Under these (or  stronger) assumptions partial results for the existence of a Jordan chain of the representing relation   have been obtained in \cite{BL,B,DLS93}, and \cite{Lu2}. The following theorem collects and completes these results, in particular, we  allow weaker conditions on the pole cancellation function. 

\begin{theorem}\label{theorem-suff}
Let $\alpha\in\mathbb R$ and $Q\in\mathcal N_\kappa(\mathcal H)$ be given with a minimal realization $\eqref{rep}$
$$
 Q(z)=Q(z_0)^*+(z-\overline{z_0})\Gamma^+\big(I+(z-z_0)(A-z)^{-1}\big)\Gamma,
$$
and let $\vec\eta:\mathcal U_\alpha\cap\mathbb C^+\to\mathcal H$ satisfy $\bf{(A)}$ and $\bf{(B)}$ in Definition $\ref{def1}$. Then the following hold: \vspace{1mm}
\begin{enumerate}
 \item If $\vec\eta$ satisfies even $\bf{(C)}$,  i.e.\,it  is a pole cancellation function, then $\alpha$ is a generalized pole of $Q$, more precisely, \label{one}
$$
\Gamma_z\vec\eta(z)\overset{weakly}{\longrightarrow}x_0,\quad \text{ as }z\hat\to\alpha, 
$$
where $x_0$ is an eigenvector of the representing relation $A$. 
\item If the pole cancellation function $\vec\eta$ satisfies even $\bf{(C_s)}$ then \label{two}
$$
\Gamma_z\vec\eta(z)\overset{strongly}{\longrightarrow}x_0,\quad \text{ as }z\hat\to\alpha. 
$$
\item\label{three}  If the pole cancellation function $\vec\eta$ satisfies also $\bf{(D)}$, $\bf{(E)}$, and $\bf{(F)}$ for some $\ell\in\mathbb N$ then for $0\leq j<\ell$
\begin{equation}\label{weak}
\frac1{j!}\Big(\Gamma_z\vec\eta(z)\Big)^{(j)}\overset{weakly}{\longrightarrow}x_j,\quad  \text{ as }z\hat\to\alpha, 
\end{equation}
and $x_0, x_1,\ldots,x_{\ell-1}$ form a Jordan chain of $A$ at $\alpha$. 
\item\label{four}
If $\vec\eta$ additionally satisfies $\bf{(F_s)}$ then the convergence in $\eqref{weak}$ is strong and for  $0\leq i,j<\ell$ it holds
\begin{equation}\label{orto}
\hspace*{1cm}\lim\limits_{z,w\hat\to\alpha}\frac1{i!j!}\frac{d^{i+j}}{dz^i\,d\overline w^j}\left(\frac{Q(z)-Q(w)^*}{z-\overline w}\vec\eta(z),\vec\eta(w)\right)=[x_i,x_j].
\end{equation}
\end{enumerate}
\end{theorem}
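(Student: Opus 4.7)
The strategy has three ingredients: by minimality, $\{\Gamma_w \vec g : w \in \varrho(A),\, \vec g \in \mathcal H\}$ is dense in $\mathcal K$, so (weak) convergence in $\mathcal K$ can be tested against this set; identity \eqref{1} converts Pontryagin pairings $[\Gamma_z\vec\eta(z), \Gamma_w \vec g]$ into $\mathcal H$-inner products involving $Q$; and identity \eqref{2} expresses $(A-z_0)^{-1}\Gamma_z$ as a difference quotient of $\Gamma_z$. Together these translate the analytic hypotheses on $Q$ into convergence and algebraic statements on vectors in $\mathcal K$.

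For (1), \eqref{1} gives
\[
[\Gamma_z\vec\eta(z), \Gamma_w \vec g] = \left(\frac{Q(z)\vec\eta(z) - Q(\overline w)\vec\eta(z)}{z - \overline w},\, \vec g\right)_{\!\mathcal H},
\]
which by (A) and (B) converges to $(\vec\eta_0, \vec g)_{\mathcal H}/(\alpha - \overline w)$. Condition (C) bounds $[\Gamma_z\vec\eta(z), \Gamma_z\vec\eta(z)]$; combined with $\dim \mathcal K^- < \infty$ this yields a Hilbert-norm bound on $\Gamma_z\vec\eta(z)$, so the limit functional extends by density to a vector $x_0 \in \mathcal K$, nonzero since $\vec\eta_0 \neq \vec 0$. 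Taking the weak limit of $(A-z_0)^{-1}\Gamma_z\vec\eta(z) = \frac{1}{z - z_0}(\Gamma_z\vec\eta(z) - \Gamma\vec\eta(z))$ from \eqref{2}, with $\Gamma\vec\eta(z) \to \vec 0$ weakly, gives $(A-z_0)^{-1} x_0 = x_0/(\alpha - z_0)$, i.e., $x_0$ is an eigenvector of $A$ at $\alpha$. For (2), specializing (C$_s$) to $w = z$ shows $[\Gamma_z\vec\eta(z), \Gamma_z\vec\eta(z)] \to [x_0, x_0]$, and weak convergence together with convergence of the indefinite quadratic form in a Pontryagin space gives strong convergence.

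For (3), the identity $\Gamma_z^{(k)} = k!\,(A-z)^{-k}\Gamma_z$, obtained by induction from \eqref{0}, shows that $x_j(z) := \frac{1}{j!}(\Gamma_z\vec\eta(z))^{(j)}$ is a well-defined vector in $\mathcal K$ and that pairing with $\Gamma_w \vec g$ amounts to differentiating the pairing from part (1) with respect to $z$. Leibniz expansion combined with (D) and (E) makes $[x_j(z), \Gamma_w \vec g]$ converge to an explicit linear combination of $(\vec\eta_0, \vec g)_{\mathcal H},\dots,(\vec\eta_j, \vec g)_{\mathcal H}$ with rational coefficients in $(\alpha - \overline w)^{-1}$. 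Condition (F) is the $j$-th order analogue of (C) and supplies the Pontryagin norm bound at each order, so the argument from part (1) applied inductively produces weak limits $x_j$. For the Jordan chain condition I differentiate the identity $(A-z_0)^{-1}\Gamma_z\vec\eta(z) = \frac{1}{z - z_0}(\Gamma_z\vec\eta(z) - \Gamma\vec\eta(z))$ a total of $j$ times. The Leibniz expansion produces terms of the form $x_{j-k}(z)/(z - z_0)^{k+1}$ together with residual terms involving $\Gamma\vec\eta^{(m)}(z)$, which vanish weakly by (D). The limit identity
\[
(A-z_0)^{-1} x_j = \sum_{k=0}^{j}\frac{(-1)^k}{(\alpha - z_0)^{k+1}}\, x_{j-k}
\]
is precisely the $(A-z_0)^{-1}$ form of the Jordan chain relation $\{x_j, \alpha x_j + x_{j-1}\} \in A$.

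Part (4) is then the higher-order analogue of (2): applying Leibniz to \eqref{1} gives an expression for $\frac{d^{i+j}}{dz^i\, d\overline w^j}[\Gamma_z\vec\eta(z), \Gamma_w \vec\eta(w)]$, and (F$_s$) supplies its limit as $z, w \hat\to \alpha$, which both upgrades weak convergence of $x_j(z)$ to strong and yields formula \eqref{orto} in the limit. The main difficulty lies in part (3): the Leibniz bookkeeping must simultaneously establish weak convergence of $x_j(z)$ (via pairings with $\Gamma_w \vec g$) and the algebraic Jordan chain identities (via derivatives of \eqref{2}), and every step must be carried out through the resolvent, since $A$ is only a relation and direct application of $A$ to vectors such as $\Gamma_z \vec h$ is not available.
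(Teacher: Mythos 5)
Your overall route coincides with the paper's: you test $\frac1{j!}\big(\Gamma_z\vec\eta(z)\big)^{(j)}$ against the total set $\{\Gamma_w\vec h\}$ via \eqref{1}, you differentiate \eqref{2} and use \textbf{(D)} to kill the $\Gamma_{z_0}\vec\eta(z)$-terms so as to obtain $(A-z_0)^{-1}x_j=\sum_{k=0}^{j}\frac{(-1)^k}{(\alpha-z_0)^{k+1}}x_{j-k}$ (which, via the recursion between consecutive $j$, is the Jordan chain relation, and for $j=0$ together with the pairing at $w=z_0$ gives $x_0\neq0$ from $\vec\eta_0\neq\vec0$), and you use \textbf{(C$_s$)}/\textbf{(F$_s$)} to upgrade to strong convergence and to identify the double limit in \eqref{orto} with $[x_i,x_j]$.

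There is, however, one step that fails as written, and it is the central functional-analytic one: you claim that \textbf{(C)} (boundedness of $[\Gamma_z\vec\eta(z),\Gamma_z\vec\eta(z)]$) ``combined with $\dim\mathcal K^-<\infty$ yields a Hilbert-norm bound on $\Gamma_z\vec\eta(z)$''. This implication is false: in any Pontryagin space with $\kappa\geq1$ there are neutral vectors of arbitrarily large norm (take $f_n=n(e^++e^-)$ with $[e^\pm,e^\pm]=\pm1$, $[e^+,e^-]=0$; then $[f_n,f_n]=0$ while $\|f_n\|\to\infty$), so boundedness of the indefinite square alone, even with finite negative index, controls nothing, and the ``extension by density of the limit functional'' has no basis at that point. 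What is true --- and what the paper uses, quoting \cite{BL} --- is the Pontryagin-space criterion that a family $(f_z)$ converges weakly provided $[f_z,f_z]$ stays \emph{bounded} and $[f_z,u]$ is Cauchy for every $u$ in a total set; the finite dimension of $\mathcal K^-$ is what makes this combination sufficient, and any norm bound comes out a posteriori, not from \textbf{(C)} (or \textbf{(F)}) alone. Since you do have both ingredients in hand (the pairing limits from \textbf{(A)},\textbf{(B)} resp.\ \textbf{(D)},\textbf{(E)}, and the bound from \textbf{(C)} resp.\ \textbf{(F)}), your argument is repaired by stating and invoking this criterion, together with its strong-convergence analogue in which boundedness of $[f_z,f_z]$ is replaced by the Cauchy property of $[f_z,f_w]$ --- which is exactly what \textbf{(C$_s$)}/\textbf{(F$_s$)} provide for parts (2) and (4); with that substitution the rest of your proposal matches the paper's proof.
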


\begin{proof}
Recall, see \cite{BL}, that weak convergence can be characterized as follows:\vspace{1mm}
{A sequence $\big(f_{k}\big)_{k\in \mathbb N}$  in a Pontryagin space 
$\big(\mathcal K,[\,\cdot\,,\,\cdot\,]\big)$   converges weakly if and only if   $ \big([ f_{k},f_{k} ]\big)_{k\in\mathbb N}$\vspace{1mm} is bounded and  $\big([f_{k},u]\big)_{k\in \mathbb N}$  is a Cauchy sequence for all elements $u$ of some total subset $U$ of $\mathcal K$.  
}

We apply this to  $ \frac1{j!}\Big(\Gamma_z\vec\eta(z)\Big)^{(j)}$ as $z\hat\to\alpha$. By conditions \eqref{1} and  {\bf (F)}   we have that 
\begin{equation}\label{weaklim}
\left[\frac1{j!}\Big(\Gamma_z\vec\eta(z)\Big)^{(j)},\frac1{j!}\Big(\Gamma_z\vec\eta(z)\Big)^{(j)}\right]=\frac1{(j!)^2}\frac{d^{2j}}{dz^j\,d\overline z^j}\left(\frac{Q(z)-Q(z)^*}{z-\overline z}\vec\eta(z),\vec\eta(z)\right)
\end{equation}
is bounded as $z\hat\to\alpha$. By assumption the set $\{\Gamma_w\vec h,\,w\in\varrho(A),\vec h\in\mathcal H\}$ (and hence \vspace{1mm} also the possibly smaller set \vspace{1mm}$\{\Gamma_w\vec h,\,w\in\varrho(A)\setminus\{\alpha\},\vec h\in\mathcal H\}$) is a total set in $\mathcal K$. By {\bf (D)} and {\bf (E)} it follows that for all $w\in\varrho(A)\setminus\{\alpha\}$ and $\vec h\in\mathcal H$
$$
\left[\frac1{j!}\Big(\Gamma_z\vec\eta(z)\Big)^{(j)}, \Gamma_w\vec h\right]=\frac1{j!}\frac{d^{j}}{dz^j}\frac{(Q(z)\vec\eta(z),\vec h)-(\vec\eta(z),Q(w)\vec h)}{z-\overline w}
$$
converges as $z\hat\to\alpha$. Thus for $j<\ell$  the above mentioned characterization from \cite{BL} implies that  $\frac1{j!}\Big(\Gamma_z\vec\eta(z)\Big)^{(j)}$ converges weakly as $z\hat\to\alpha$,   and we denote the limit element by $x_j$. 

Next we show that $x_0,\ldots,x_{\ell-1}$ is a Jordan chain of $A$  at $\alpha$. With \eqref{2} we obtain for $0\leq j<\ell$ 
\begin{eqnarray*}
 (A-z_0)^{-1} x_j &=&  \text{w-}\!\lim_ {z\hat \to\alpha}(A-z_0)^{-1}\frac1{j!}\Big(\Gamma_z\vec\eta(z)\Big)^{(j)}\\
&=&  \text{w-}\!\lim_ {z\hat \to\alpha} \frac{d^{j}}{dz^j}\left(\frac1{j!(z-z_0)}\big(\Gamma_z\vec\eta(z)-\Gamma_{z_0}\vec\eta(z)\big)\right).
\end{eqnarray*}
Taking into account  {\bf (D)} this further equals 
\begin{eqnarray}
\hspace*{0.5cm}&=&  \text{w-}\lim_ {z\hat \to\alpha} \frac{d^{j}}{dz^j}\frac1{j!(z-z_0)} \Gamma_z\vec\eta(z) 
=\text{w-}\lim_ {z\hat \to\alpha}\sum_{k=0}^j\frac1{j!} \binom{j}{k} \frac{(-1)^{j-k}(j-k)!}{(z-z_0)^{j-k+1}}\Big(\Gamma_z\vec\eta(z)\Big)^{(k)}\nonumber\\
&=&\sum_{k=0}^j\frac{(-1)^{j-k}}{(\alpha-z_0)^{j-k+1}} x_k.\nonumber
\end{eqnarray}
For $j=0$ this is 
$$
(A-z_0)^{-1}x_0=\frac1{\alpha-z_0} x_0.
$$
Assume now $x_0=0$, then for all $\vec h\in\mathcal H$ we have
$$
0=[x_0,\Gamma\vec h]=\lim_{z\hat\to\alpha}[\Gamma_z\vec\eta(z),\Gamma\vec h]=\lim_{z\hat\to\alpha}\Big(\frac{Q(z)-Q(\overline{z_0})}{z-\overline{z_0}}\vec\eta(z),\vec h\Big)=\frac{(\vec\eta_0,\vec h)}{\alpha-\overline{z_0}},
$$
and hence  $\vec\eta_0=\vec0$. This contradicts {\bf (B)},   hence $x_0$ is an eigenvector of $A$ at $\alpha$, and \ref{one} is \nolinebreak shown. 

Rewriting 
$$
(A-z_0)^{-1} x_j=\sum_{k=0}^j\frac{(-1)^{j-k}}{(\alpha-z_0)^{j-k+1}} x_k , \qquad\text{ for }j=1,\ldots,\ell-1
$$
by using this relation again backwards for  $j-1$ yields
$$
(A-z_0)^{-1} x_j=\frac1{\alpha -z_0}\left(x_j-(A-z_0)^{-1}x_{j-1}\right),
$$
and hence $x_0,\ldots,x_{\ell-1}$ forms a Jordan chain of $A$ at $\alpha$, which proves \eqref{three}.

In order to show \eqref{two} and \eqref{four} a similar characterization for strong convergence is used, namely where the condition that $ \big([ f_{k},f_{k} ]\big)_{k\in\mathbb N}$ is bounded is substituted by the requirement that  $ \big([ f_{k},f_{n} ]\big)_{k,n\in\mathbb N}$ is a Cauchy-sequence. 
Then instead of \eqref{weaklim} we note that by \textbf{(F$_s$)} we have that 
$$
\left[\frac1{j!}\Big(\Gamma_z\vec\eta(z)\Big)^{(j)},\frac1{j!}\Big(\Gamma_w\vec\eta(w)\Big)^{(j)}\right]=\frac1{(j!)^2}\frac{d^{2j}}{dz^j\,d\overline w^j}\left(\frac{Q(z)-Q(w)^*}{z-\overline w}\vec\eta(z),\vec\eta(w)\right)
$$
converges as $z,w\hat\to\alpha$ and hence 
$$
\frac1{j!}\Big(\Gamma_z\vec\eta(z)\Big)^{(j)}\overset{strongly}{\longrightarrow}x_j,\quad  \text{ as }z\hat\to\alpha.
$$
From \eqref{three} we already know that $x_0,\ldots,x_{\ell-1}$ is a Jordan chain of $A$ and hence \eqref{two} and the first statement of \eqref{four} are shown.
Finally, as by \textbf{(F$_s$)} the limit in \eqref{orto} exists it follows that 
$$
\frac{d^{i+j}}{dz^i\,d\overline w^j}\left(\frac{Q(z)-Q(w)^*}{z-\overline w}\vec\eta(z),\vec\eta(w)\right)=
\left[\frac1{i!}\Big(\Gamma_z\vec\eta(z)\Big)^{(i)}, \frac1{j!}\Big(\Gamma_w\vec\eta(w)\Big)^{(j)}\right]$$
 for $0\leq i,j<\ell$ converges to $[x_i,x_j]$ as $z,w\hat\to\alpha$, 
which completes the proof.
\end{proof}

\begin{remark}\label{rem}
Statement \eqref{one} was basically shown in \cite{BL}, only  property \textbf{(C)} was replaced by a stronger property (but weaker than \textbf{(C$_s$)}), namely that the limit
$$\lim\limits_{z\hat\to\alpha}\left(\frac{Q(z)-Q(z)^*}{z-\overline z}\vec\eta(z),\vec\eta(z)\right)\quad \text{  }
$$
exists. Already there (and explicitly mentioned in \cite{DLS93}) only the weaker condition corresponding to \textbf{(C)} is actually  used in the proof. However,   the above limit  plays an important role there, as it is shown that 
$$
\lim\limits_{z\hat\to\alpha}\left(\frac{Q(z)-Q(z)^*}{z-\overline z}\vec\eta(z),\vec\eta(z)\right)\geq[x_0,x_0]
$$
and hence the non-positivity of this limit implies that $x_0$ is a non-positive element, which was an important issue  in these papers.

Statement \eqref{two} can be found in    \cite{BL}   in the proof of Proposition 1.
 Statements \eqref{three} and \eqref{four} are  new in this generality.  Originally such results have been proven in \cite{B} and \cite{Lu2}. In \cite{B}  holomorphy conditions were used, as only meromorphic $Q$ are treated there. In the general case the proof was given in \cite{Lu2}, however, using {\bf (F$_s$)} and  an (unnecessary) strong variant of {\bf (E)}. Note that with  {\bf (F)} instead of {\bf (F$_s$)} only inequalities could have been achieved in \ref{orto}.

\end{remark}\enlargethispage{\baselineskip}\enlargethispage{\baselineskip}

\subsection{Pole   cancellation functions by means of Jordan chains}\label{cc}

The main result of this paper is Theorem \ref{main} together with Theorem \ref{final}. 
Recall that a function $Q\in\mathcal N_\kappa(\mathcal H)$ is called \textit{regular} if there exists a point $\gamma\in\mathbb C^+$ such that $Q(\gamma)$ is boundedly invertible. For a regular function $Q$ a point  $\alpha\in\mathbb C\cup\{\infty\}$ is called \textit{generalized zero} of $Q$ if it is a  generalized pole of $\widehat Q(z):=-Q(z)^{-1}$.

\begin{theorem}\label{main}
Let the regular generalized Nevanlinna function $Q$  be given with a minimal realization \eqref{rep}
$$
 Q(z)=Q(z_0)^*+(z-\overline{z_0})\Gamma^+\big(I+(z-z_0)(A-z)^{-1}\big)\Gamma
$$
and assume that $\alpha\in\mathbb R$ is not a generalized zero of $Q$.

If $\alpha\in\mathbb R$ is a generalized pole of $Q$, that is $\alpha\in\sigma_p(A)$, and $x_0,x_1,\ldots,x_{\ell-1}$ is  a  Jordan chain of $A$  at $\alpha$, then
\begin{equation}\label{eta}
\vec\eta \left( z \right):=(z-\overline{z_0}){Q\left( z \right)}^{-1}\Gamma^{+}\Big( 
x_{0}+\left( z-\alpha \right)x_{1}+\ldots +{(z-\alpha )}^{\ell-1}x_{\ell-1} 
\Big) .
\end{equation}
is a strong pole cancellation function of $Q$ at $\alpha$ of order at least $\ell$.

\end{theorem}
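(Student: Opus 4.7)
My first step is to set $\vec p(z):=\sum_{k=0}^{\ell-1}(z-\alpha)^{k}x_{k}$, an $\mathcal K$-valued polynomial, so that $\vec\eta(z)=(z-\overline{z_0})Q(z)^{-1}\Gamma^{+}\vec p(z)$. The decisive first observation is that
$$\vec\psi(z):=Q(z)\vec\eta(z)=(z-\overline{z_0})\Gamma^{+}\vec p(z)$$
is an $\mathcal H$-valued \emph{polynomial} of degree at most $\ell$ in $z$. Hence condition \textbf{(E)} and the ``$Q\vec\eta$-half'' of \textbf{(F$_s$)} are immediate by Taylor expansion at $\alpha$, yielding $\vec\eta_j=(\alpha-\overline{z_0})\Gamma^{+}x_j+\Gamma^{+}x_{j-1}$ (with $x_{-1}:=\vec 0$); in particular $\vec\eta_0=(\alpha-\overline{z_0})\Gamma^{+}x_0$. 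The required non-vanishing $\vec\eta_0\neq\vec 0$ is equivalent to $\Gamma^{+}x_0\neq\vec 0$, and this follows from minimality: using $(A-\overline w)^{-1}x_0=(\alpha-\overline w)^{-1}x_0$ together with $\Gamma_w^{+}=\Gamma^{+}+(\overline w-\overline{z_0})\Gamma^{+}(A-\overline w)^{-1}$, the assumption $\Gamma^{+}x_0=\vec 0$ forces $\Gamma_w^{+}x_0=\vec 0$ for every $w\in\varrho(A)$, hence $[x_0,\Gamma_w\vec h]=0$ for all $w$ and $\vec h$, so $x_0=\vec 0$.

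Next I collect the two identities that drive the rest of the argument. The Jordan-chain relations telescope to
$$(A-z)\vec p(z)=-(z-\alpha)^{\ell}x_{\ell-1},$$
identifying $\vec p(z)$ (up to sign) with the principal part of $(A-z)^{-1}x_{\ell-1}$ at $\alpha$. Applying $Q(z)=Q(w)^{*}+(z-\overline w)\Gamma_w^{+}\Gamma_z$ to $\vec\eta(z)$ yields the exact identity
$$Q(w)^{*}\vec\eta(z)=\vec\psi(z)-(z-\overline w)\,\Gamma_w^{+}\Gamma_z\vec\eta(z).$$
A short calculation based on $\Gamma_w^{+}(A-\overline w)=\Gamma^{+}(A-\overline{z_0})$ also gives the companion algebraic relation $(\alpha-\overline w)\Gamma_w^{+}x_j+\Gamma_w^{+}x_{j-1}=\vec\eta_j$, which will produce exact cancellation in the limit.

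My strategy for the remaining conditions is to establish the single strong-convergence statement
$$\tfrac{1}{j!}\bigl(\Gamma_z\vec\eta(z)\bigr)^{(j)}\longrightarrow x_j\quad\text{in }\mathcal K,\qquad 0\le j<\ell,\qquad(\star)$$
from which everything else flows. Indeed, $(\star)$ combined with $\bigl(\tfrac{Q(z)-Q(w)^{*}}{z-\overline w}\vec\eta(z),\vec\eta(w)\bigr)=[\Gamma_z\vec\eta(z),\Gamma_w\vec\eta(w)]$ immediately yields all \textbf{(F$_s$)}-limits with the desired values $[x_i,x_j]$. Moreover, differentiating the exact identity of the previous paragraph $j$ times in $z$ and invoking $(\star)$ together with $(\alpha-\overline w)\Gamma_w^{+}x_j+\Gamma_w^{+}x_{j-1}=\vec\eta_j$, one finds $\tfrac{1}{j!}Q(w)^{*}\vec\eta^{(j)}(z)\to\vec\eta_j-\vec\eta_j=\vec 0$ as $z\hat\to\alpha$; by regularity of $Q$ one may choose $w$ with $Q(w)^{*}=Q(\overline w)$ boundedly invertible, and cancelling produces even \emph{strong} vanishing of $\vec\eta^{(j)}(z)$, which is more than \textbf{(D)} requires.

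The heart of the proof is therefore $(\star)$, and this is where the hypothesis that $\alpha$ is not a generalized zero of $Q$ must be used. My plan is to compare $\Gamma_z\vec\eta(z)$ against $\vec p(z)$ and write
$$\Gamma_z\vec\eta(z)=\vec p(z)+E(z),$$
where, exploiting $(A-z)\Gamma_z=(A-z_0)\Gamma$ on $\mathcal H$ and $(A-z)\vec p(z)=-(z-\alpha)^{\ell}x_{\ell-1}$, the error $E(z)$ is expressed through $Q(z)^{-1}$ applied to a quantity that vanishes of order $\ell$ at $\alpha$. The assumption $\alpha\notin\sigma_p(\widehat A)$---i.e.\ $\alpha$ is not a generalized pole of $\widehat Q=-Q^{-1}$---is precisely what prevents $Q(z)^{-1}$ from developing a matching singular direction at $\alpha$ capable of resurrecting a singularity in $E(z)$. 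The expected main obstacle is to make this last step rigorous; I anticipate doing so by contraposition, applying Theorem \ref{theorem-suff} to $\widehat Q$: any failure of $(\star)$ (or of its derivative analogues) would manufacture, after unwinding, a pole-cancellation function of $\widehat Q$ at $\alpha$, producing an eigenvector of $\widehat A$ at $\alpha$ and contradicting the hypothesis. Translating ``no generalized pole of $\widehat Q$'' into this quantitative analytic cancellation is the technical crux of the proof.
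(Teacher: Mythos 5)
Your reductions are fine and largely parallel the paper: the polynomial form of $Q(z)\vec\eta(z)$ gives \textbf{(B, E)}, the minimality argument for $\Gamma^{+}x_0\neq\vec 0$ is exactly the one used in the paper, and once one knows that $\Gamma_z\vec\eta(z)=x(z)+E(z)$ with $E^{(j)}(z)\to 0$ strongly for $j<\ell$, your route to \textbf{(D)} and \textbf{(C$_s$, F$_s$)} via $\bigl(\tfrac{Q(z)-Q(w)^{*}}{z-\overline w}\vec\eta(z),\vec\eta(w)\bigr)=[\Gamma_z\vec\eta(z),\Gamma_w\vec\eta(w)]$ is sound (the paper argues the same way via its formula \eqref{gammah}). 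But the proof has a genuine gap precisely at the point you yourself flag as ``the technical crux'': you never actually control the error term, i.e.\ you never use the hypothesis that $\alpha$ is not a generalized zero in a quantitative way. Your fallback plan --- contraposition via Theorem \ref{theorem-suff} applied to $\widehat Q$ --- does not work as stated: that theorem runs in the direction ``pole cancellation function $\Rightarrow$ eigenvector of $\widehat A$'', so to contradict $\alpha\notin\sigma_p(\widehat A)$ you would have to \emph{construct} a function satisfying \textbf{(A)--(C)} for $\widehat Q$ out of the mere failure of strong convergence of $(\Gamma_z\vec\eta(z))^{(j)}$, and no such construction is given (nor is one apparent; failure of a limit does not produce the weak limits required in \textbf{(A)} and \textbf{(B)} for $\widehat Q$). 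Saying that $E(z)$ is ``$Q(z)^{-1}$ applied to a quantity vanishing of order $\ell$'' does not help by itself, since the whole difficulty is that $Q(z)^{-1}$ may be singular at $\alpha$ in various directions, and for orders $j\ge 1$ one needs control of derivatives, not just of the product.

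The paper closes exactly this gap by passing to the realization of $\widehat Q(z)=-Q(z)^{-1}$ with representing relation $\widehat A$ and map $\widehat\Gamma=-\Gamma Q(z_0)^{-1}$: Lemma \ref{lemma1} gives $\widehat\Gamma_z=\Gamma_z\widehat Q(z)$, and Lemma \ref{rewrite} yields the exact identities
\begin{equation*}
\vec\eta(z)=(z-\overline{z_0})(z-\alpha)^{\ell}\,\widehat\Gamma^{+}\bigl(I+(z-\overline{z_0})(\widehat A-z)^{-1}\bigr)(A-\overline{z_0})^{-1}x_{\ell-1},
\end{equation*}
and
\begin{equation*}
\Gamma_z\vec\eta(z)=x(z)+(z-\alpha)^{\ell}\bigl(I+(z-\overline{z_0})(\widehat A-z)^{-1}\bigr)(A-\overline{z_0})^{-1}x_{\ell-1},
\end{equation*}
so the error term is written through the resolvent of the self-adjoint relation $\widehat A$. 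The hypothesis ``$\alpha$ is not a generalized zero'' enters as $\alpha\notin\sigma_p(\widehat A)$, which gives the non-tangential strong limits $(z-\alpha)(\widehat A-z)^{-1}\to 0$ and, for $j<\ell$, $\tfrac{d^{j}}{dz^{j}}\bigl((z-\alpha)^{\ell}(\widehat A-z)^{-1}\bigr)\to 0$; this is the quantitative cancellation you were missing, and it also delivers \textbf{(A, D)} directly (even with strong convergence), making your detour through $Q(w)^{*}\vec\eta^{(j)}(z)$ unnecessary. Without these (or equivalent) identities and resolvent estimates, your argument does not establish $(\star)$, and hence does not prove the theorem.
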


The factor $(z-\overline{z_0})$ is not necessary for the above statement, however,  as detailed in the following corollary, it allows us to recover the original Jordan chain from the pole cancellation function as in Theorem \ref{theorem-suff}.

\begin{corollary}\label{back}
With the notation from Theorem $\ref{main}$  for $0\leq i,j<\ell$ it holds
$$
\frac1{j!}\left(\Gamma_z\vec\eta(z)\right)^{(j)}\overset{strongly}{\longrightarrow}x_j,\quad \text{ as }z\hat\to\alpha, 
$$
and 
$$\lim\limits_{z,w\hat\to\alpha}\frac1{i!j!}\frac{d^{i+j}}{dz^i\,d\overline w^j}\left(\frac{Q(z)-Q(w)^*}{z-\overline w}\vec\eta(z),\vec\eta(w)\right)=[x_i,x_j] .$$
\end{corollary}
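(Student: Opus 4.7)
The plan is to pull the conclusion back through Theorem \ref{theorem-suff}. By Theorem \ref{main}, the function $\vec\eta$ in \eqref{eta} is a strong pole cancellation function of order at least $\ell$, so items \eqref{three} and \eqref{four} of Theorem \ref{theorem-suff} apply and produce a Jordan chain $\tilde x_0,\dots,\tilde x_{\ell-1}$ of $A$ at $\alpha$ with
\[
\frac1{j!}\bigl(\Gamma_z\vec\eta(z)\bigr)^{(j)}\overset{\text{strongly}}{\longrightarrow}\tilde x_j,\qquad z\hat\to\alpha,
\]
together with the bilinear-form limit equal to $[\tilde x_i,\tilde x_j]$. Thus both claims of the corollary reduce to the identification $\tilde x_j=x_j$ for $0\le j<\ell$.

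To identify the limits I would test against the total set $\{\Gamma_w\vec h:w\in\varrho(A),\,\vec h\in\mathcal H\}$ in $\mathcal K$. Using \eqref{1} and the defining relation $Q(z)\vec\eta(z)=(z-\overline{z_0})\Gamma^+p(z)$, with $p(z):=\sum_{k=0}^{\ell-1}(z-\alpha)^k x_k$, the inner product splits as
\[
[\Gamma_z\vec\eta(z),\Gamma_w\vec h]=\frac{z-\overline{z_0}}{z-\overline w}\bigl[p(z),\Gamma\vec h\bigr]-\frac{(Q(\overline w)\vec\eta(z),\vec h)}{z-\overline w},
\]
valid for $w\in\varrho(A)$ with $\overline w\ne\alpha$ (which can be arranged, as $\alpha$ is real). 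The second summand and its $z$-derivatives up to order $\ell-1$ vanish as $z\hat\to\alpha$: since $1/(z-\overline w)$ is analytic at $\alpha$, Leibniz reduces them to linear combinations of $(\vec\eta(z))^{(k)}$ with $k<\ell$, each of which tends weakly to $\vec 0$ by property \textbf{(D)}.

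The first summand is holomorphic at $\alpha$, so $\frac1{j!}$ times its $j$-th derivative there equals the $j$-th Taylor coefficient of $\frac{z-\overline{z_0}}{z-\overline w}[p(z),\Gamma\vec h]$, which a short Cauchy-product computation renders as
\[
\frac{\alpha-\overline{z_0}}{\alpha-\overline w}[x_j,\Gamma\vec h]+\sum_{k=0}^{j-1}\frac{(-1)^{j-k}(\overline w-\overline{z_0})}{(\alpha-\overline w)^{j-k+1}}[x_k,\Gamma\vec h].
\]
On the other hand, iterating $(A-\overline w)^{-1}x_k=\tfrac{1}{\alpha-\overline w}\bigl(x_k-(A-\overline w)^{-1}x_{k-1}\bigr)$ along the Jordan chain and applying $\Gamma_w^+=\Gamma^+\bigl(I+(\overline w-\overline{z_0})(A-\overline w)^{-1}\bigr)$ yields exactly the same expression for $[x_j,\Gamma_w\vec h]$. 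By totality this forces $\tilde x_j=x_j$, and the bilinear-form statement is then Theorem \ref{theorem-suff}\eqref{four} verbatim. The main obstacle is this matching step: one must line up the Taylor expansion of the rational factor $\frac{z-\overline{z_0}}{z-\overline w}$ at $\alpha$ with the explicit action of $\Gamma_w^+$ on a Jordan chain, and the factor $(z-\overline{z_0})$ in the definition \eqref{eta} is precisely what makes these expansions equal rather than merely proportional.
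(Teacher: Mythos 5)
Your argument is correct, but it is not the route the paper takes. The paper's proof of Corollary \ref{back} is essentially immediate from the decomposition \eqref{gammah} established inside the proof of Theorem \ref{main}, namely $\Gamma_z\vec\eta(z)=x(z)+h_\ell(z)$ with $\frac{d^j}{dz^j}h_\ell(z)\to 0$ strongly for $j<\ell$: differentiating $j$ times and dividing by $j!$ gives $x_j$ directly, and writing the Nevanlinna kernel as $[\Gamma_z\vec\eta(z),\Gamma_w\vec\eta(w)]=[x(z)+h_\ell(z),x(w)+h_\ell(w)]$ gives the bilinear limit $[x_i,x_j]$ at once, with no need to identify abstract limit vectors. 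You instead treat Theorem \ref{main} as a black box: you invoke Theorem \ref{theorem-suff}\,(3),(4) to get strong limits $\tilde x_j$ and then identify $\tilde x_j=x_j$ by pairing with the total set $\{\Gamma_w\vec h\}$, splitting $[\Gamma_z\vec\eta(z),\Gamma_w\vec h]$ via \eqref{1} and the identity $Q(z)\vec\eta(z)=(z-\overline{z_0})\Gamma^+x(z)$, killing the second summand with \textbf{(D)}, and matching the Taylor coefficients of $\frac{z-\overline{z_0}}{z-\overline w}[x(z),\Gamma\vec h]$ against $[x_j,\Gamma_w\vec h]$ computed from the Jordan chain relations $(A-\overline w)^{-1}x_k=\frac1{\alpha-\overline w}\bigl(x_k-(A-\overline w)^{-1}x_{k-1}\bigr)$; I checked that these two expressions do coincide, so the totality argument closes the identification. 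What your route buys is logical independence from the internals of the proof of Theorem \ref{main} (only its statement, the formula \eqref{eta}, and minimality are used), and it makes explicit why the normalizing factor $(z-\overline{z_0})$ recovers the original chain; what it costs is a noticeably longer computation, where the paper's use of \eqref{gammah} yields both assertions in two lines.
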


\begin{remark}\label{simple}
If $Q$ can be written in the simpler form \eqref{resolvent} then $\vec\eta$ in Theorem \ref{main} can be chosen as $\vec\eta(z):=Q(z)^{-1}\Gamma_0^+\Big( 
x_{0}+\left( z-\alpha \right)x_{1}+\ldots +{(z-\alpha )}^{\ell-1}x_{\ell-1} 
\Big)$ .
\end{remark}

The proof of Theorem \ref{main} is based on two technical lemmas. Recall first (see eg.\,\cite{Lu1,LaLu}) that if $Q\in\mathcal N_\kappa(\mathcal H)$ has the realization 
$$
 Q(z)=Q({z_0})^*+(z-\overline{z_0})\Gamma^+\big(I+(z-z_0)(A-z)^{-1}\big)\Gamma
$$
then the inverse function $\widehat Q$ has the realization
$$
 \widehat Q(z)=\widehat Q({z_0})^*+(z-\overline{z_0})\widehat\Gamma^+\big(I+(z-z_0)(\widehat A-z)^{-1}\big)\widehat\Gamma,
$$
where 
\begin{equation}\label{gammahat}
\widehat\Gamma:=\Gamma\widehat Q(z_0)=-\Gamma Q(z_0)^{-1}
\end{equation}
and 
\begin{equation}\label{Ahat}
 (\widehat A-z)^{-1}=(A-z)^{-1}+\Gamma_z\widehat Q(z)\Gamma_{\overline z}^+.
\end{equation}

\begin{lemma}\label{lemma1}
 With the above notations and the definition
$$
\widehat \Gamma_z:=\big(I+(z-z_0)(\widehat A-z)^{-1}\big)\widehat\Gamma
$$
it holds
\begin{equation}\label{gammahat1}
\widehat\Gamma_z=\Gamma_z\widehat Q(z)
\end{equation}
and
\begin{equation}\label{Ahat1}
 ( A-z)^{-1}=(\widehat A-z)^{-1}+\widehat\Gamma_z Q(z)\widehat\Gamma_{\overline z}^+.
\end{equation}
\end{lemma}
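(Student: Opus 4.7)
I would prove the two identities in sequence, with the second being essentially a consequence of the first combined with \eqref{Ahat}.

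For \eqref{gammahat1}, I start from the definition and substitute \eqref{Ahat}:
\begin{equation*}
\widehat\Gamma_z=\widehat\Gamma+(z-z_0)(A-z)^{-1}\widehat\Gamma+(z-z_0)\,\Gamma_z\widehat Q(z)\,\Gamma_{\overline z}^+\widehat\Gamma.
\end{equation*}
Using $\widehat\Gamma=\Gamma\widehat Q(z_0)$ from \eqref{gammahat}, the first two terms collapse to $\Gamma_z\widehat Q(z_0)$, so it remains to show
\begin{equation*}
\widehat Q(z_0)+(z-z_0)\,\widehat Q(z)\,\Gamma_{\overline z}^+\widehat\Gamma=\widehat Q(z),
\end{equation*}
after which factoring $\Gamma_z$ on the left finishes the identity. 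The middle factor $\Gamma_{\overline z}^+\widehat\Gamma=\Gamma_{\overline z}^+\Gamma\,\widehat Q(z_0)$ is computed by \eqref{1} (noting $\Gamma_{z_0}=\Gamma$), which gives $\Gamma_{\overline z}^+\widehat\Gamma=\tfrac{Q(z)-Q(z_0)}{z-z_0}\widehat Q(z_0)$. Substituting and using the defining relation $\widehat Q(z)Q(z)=-I$ makes the two middle terms telescope: $\widehat Q(z)(Q(z)-Q(z_0))\widehat Q(z_0)=-\widehat Q(z_0)+\widehat Q(z)$, which is exactly what is needed.

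For \eqref{Ahat1}, I would combine the just-established \eqref{gammahat1} with its adjoint version. Since $\widehat Q(\overline z)^*=\widehat Q(z)$ (symmetry of generalized Nevanlinna functions), one has $\widehat\Gamma_{\overline z}^+=\widehat Q(z)\,\Gamma_{\overline z}^+$. Therefore
\begin{equation*}
\widehat\Gamma_z\,Q(z)\,\widehat\Gamma_{\overline z}^+=\Gamma_z\,\widehat Q(z)\,Q(z)\,\widehat Q(z)\,\Gamma_{\overline z}^+=-\Gamma_z\widehat Q(z)\Gamma_{\overline z}^+,
\end{equation*}
again using $\widehat Q(z)Q(z)=-I$. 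Plugging this into the right-hand side of \eqref{Ahat1} and rearranging gives precisely the statement \eqref{Ahat}, which was assumed.

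The main obstacle is really just the bookkeeping in the first step: one must keep track of where $Q(z_0)^{-1}$ sits, apply the resolvent-type identity \eqref{1} with the correct pair $(\overline z,z_0)$, and then recognize the telescoping. Once that algebraic identity $\widehat Q(z_0)+(z-z_0)\widehat Q(z)\Gamma_{\overline z}^+\widehat\Gamma=\widehat Q(z)$ is in place, both assertions follow cleanly, and the second one requires no further analysis beyond observing that $\widehat Q(z)Q(z)\widehat Q(z)=-\widehat Q(z)$.
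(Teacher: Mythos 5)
Your proof is correct and follows essentially the same route as the paper: substitute \eqref{Ahat} into the definition of $\widehat\Gamma_z$, evaluate $\Gamma_{\overline z}^+\Gamma=\Gamma_{\overline z}^+\Gamma_{z_0}$ via \eqref{1}, and let $\widehat Q(z)Q(z)=-I$ telescope the terms, then obtain \eqref{Ahat1} from $\widehat Q(z)Q(z)\widehat Q(z)=-\widehat Q(z)$ together with \eqref{Ahat}. The only difference is cosmetic bookkeeping (you factor out $\Gamma_z$ and work with $\widehat\Gamma=\Gamma\widehat Q(z_0)$, while the paper writes $\widehat\Gamma=-\Gamma Q(z_0)^{-1}$ and computes in one chain), so no further comment is needed.
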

\begin{proof}
We start from the definition of $\widehat\Gamma_z$ and use \eqref{Ahat} and \eqref{1}.
\begin{eqnarray*}
 \widehat\Gamma_z &=& -\left(I+(z-z_0)(\widehat A-z)^{-1}\right)\Gamma Q(z_0)^{-1}\\
&=& -\Big(I+(z-z_0)(A-z)^{-1}-(z-z_0)\Gamma_z Q(z)^{-1}\Gamma_{\overline z}^+\Big)\Gamma Q(z_0)^{-1}\\
&=& -\Gamma_zQ^{-1}(z_0)+\Gamma_zQ(z)^{-1} \big(Q(z)-Q(z_0)\big)Q(z_0)^{-1}= \Gamma_z\widehat Q(z).
\end{eqnarray*}
This implies
$$
\widehat\Gamma_z Q(z)\widehat\Gamma_{\overline z}^+=\Gamma_z\widehat Q(z)Q(z)\widehat Q(z)\Gamma_{\overline z}^+=-\Gamma_z\widehat Q(z)\Gamma_{\overline z}^+,
$$
and hence also \eqref{Ahat1} is shown.
\end{proof}

In what follows, given a Jordan chain $x_0,\ldots,x_{\ell-1}$ of $A$ at $\alpha$ we use the following abbreviation
$$
x(z):= x_0+(z-\alpha)x_1+\ldots +(z-\alpha)^{\ell-1}x_{\ell-1}.
$$
Note that then 
\begin{equation}\label{xz}
(A-\overline{z_0})^{-1}x(z)=\frac1{z-\overline{z_0}}\Big(x(z)+(z-\alpha)^\ell(A-\overline{ z_0})^{-1}x_{\ell-1}\Big).
\end{equation}
\begin{lemma}\label{rewrite}
 With the above notations it holds
\begin{equation}\label{only}
\widehat\Gamma_z\Gamma^+x(z)=\frac{-1}{z-\overline{z_0}}\Big(x(z)+(z-\alpha)^\ell\big(I+(z-\overline{z_0})(\widehat A-z)^{-1}\big) (A-\overline{z_0})^{-1}x_{\ell-1}\Big).
\end{equation}

\end{lemma}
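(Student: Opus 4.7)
The starting point is a purely algebraic observation about the polynomial $x(z)$. Using the Jordan-chain relations $(A-\alpha)x_k=x_{k-1}$ for $k\geq 1$ and $(A-\alpha)x_0=0$, together with $A-z=(A-\alpha)-(z-\alpha)$, the internal terms in
$$(A-z)x(z)=\sum_{k=0}^{\ell-1}(z-\alpha)^k\bigl((A-\alpha)x_k-(z-\alpha)x_k\bigr)$$
cancel after reindexing, leaving only the boundary term
$$(A-z)x(z)=-(z-\alpha)^\ell x_{\ell-1}.$$
Equivalently, for $z\in\varrho(A)\setminus\{\alpha\}$ one has $(A-z)^{-1}x_{\ell-1}=-(z-\alpha)^{-\ell}x(z)$. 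This single identity is how the Jordan-chain structure will enter the rest of the proof.

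Next I would rewrite the operator prefactor $\widehat\Gamma_z\Gamma^+$ purely in terms of the resolvents of $A$ and $\widehat A$. Taking adjoints in \eqref{gammahat1} and using $\widehat Q(\overline z)^*=\widehat Q(z)$ gives $\widehat\Gamma_{\overline z}^+=\widehat Q(z)\Gamma_{\overline z}^+$; inserting this into \eqref{Ahat1} and invoking $Q(z)\widehat Q(z)=-I$ yields the crucial cross identity
$$\widehat\Gamma_z\Gamma_{\overline z}^+=(\widehat A-z)^{-1}-(A-z)^{-1}.$$
To trade $\Gamma_{\overline z}^+$ for $\Gamma^+$, the adjoint of \eqref{0} reads $\Gamma_{\overline z}^+=\Gamma^+(A-z)^{-1}(A-\overline{z_0})$, so right-multiplying the cross identity by $(A-\overline{z_0})^{-1}$ produces
$$\widehat\Gamma_z\Gamma^+(A-z)^{-1}=\bigl((\widehat A-z)^{-1}-(A-z)^{-1}\bigr)(A-\overline{z_0})^{-1}.$$

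To close the argument, I would apply both sides to $x_{\ell-1}$. On the left, the first paragraph converts $(A-z)^{-1}x_{\ell-1}$ into $-(z-\alpha)^{-\ell}x(z)$; on the right, commuting the two $A$-resolvents and applying the same substitution converts $(A-z)^{-1}(A-\overline{z_0})^{-1}x_{\ell-1}$ into $-(z-\alpha)^{-\ell}(A-\overline{z_0})^{-1}x(z)$. Multiplying through by $-(z-\alpha)^\ell$ yields
$$\widehat\Gamma_z\Gamma^+x(z)=-(A-\overline{z_0})^{-1}x(z)-(z-\alpha)^\ell(\widehat A-z)^{-1}(A-\overline{z_0})^{-1}x_{\ell-1}.$$
Finally, \eqref{xz} expresses $(A-\overline{z_0})^{-1}x(z)$ as $\tfrac{1}{z-\overline{z_0}}\bigl(x(z)+(z-\alpha)^\ell(A-\overline{z_0})^{-1}x_{\ell-1}\bigr)$, and combining this with the remaining $(z-\alpha)^\ell$-term reassembles the factor $I+(z-\overline{z_0})(\widehat A-z)^{-1}$ and delivers \eqref{only}. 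The main obstacle is essentially bookkeeping: tracking adjoints carefully when moving between the $\widehat A$- and $A$-side data, since once the cross identity is established the remainder is routine resolvent algebra driven entirely by the cancellation in the first paragraph.
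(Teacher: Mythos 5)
Your argument is correct, and it reaches \eqref{only} by a route that is organized differently from the paper's, although it rests on the same two ingredients: the resolvent-difference formula of Lemma \ref{lemma1} and the Jordan-chain identity behind \eqref{xz}. The paper starts from the definition of $\widehat\Gamma_z$, rewrites $\Gamma Q(z_0)^{-1}\Gamma^+$ via \eqref{Ahat1} evaluated at the \emph{fixed} point $\overline{z_0}$ (so that $(A-\overline{z_0})^{-1}-(\widehat A-\overline{z_0})^{-1}$ appears, conjugated by resolvent factors of $\widehat A$ which are then collapsed by the resolvent identity), and only at the end splits $(A-\overline{z_0})^{-1}x(z)$ using \eqref{xz}. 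You instead evaluate the same identity at the \emph{variable} point: \eqref{Ahat1} combined with the adjoint of \eqref{gammahat1} gives $\widehat\Gamma_z\Gamma_{\overline z}^+=(\widehat A-z)^{-1}-(A-z)^{-1}$, the adjoint of \eqref{0} together with the resolvent identity gives $\Gamma_{\overline z}^+(A-\overline{z_0})^{-1}=\Gamma^+(A-z)^{-1}$, and then the chain relation in the inverted form $(A-z)^{-1}x_{\ell-1}=-(z-\alpha)^{-\ell}x(z)$ (which is exactly the identity underlying \eqref{xz}) lets you feed in $x_{\ell-1}$ and reassemble \eqref{only}. What your version buys is a clean chain-independent operator identity, $\widehat\Gamma_z\Gamma^+(A-z)^{-1}=\bigl((\widehat A-z)^{-1}-(A-z)^{-1}\bigr)(A-\overline{z_0})^{-1}$, at the harmless price of dividing by $(z-\alpha)^{\ell}$ (fine, since $z$ is nonreal and in $\varrho(A)\cap\varrho(\widehat A)$); the paper avoids inverting the chain relation but needs the fussier bookkeeping of the fixed-point factors at $z_0$. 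One cosmetic caution: the intermediate formula $\Gamma_{\overline z}^+=\Gamma^+(A-z)^{-1}(A-\overline{z_0})$ is only formal when $A$ is a relation, but since you immediately right-multiply by $(A-\overline{z_0})^{-1}$, the identity you actually use follows directly from the adjoint of \eqref{0} and the resolvent identity, so this is not a gap.
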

\begin{proof}
The definition of $\widehat\Gamma_z$ gives 
$$
-\widehat\Gamma_z\Gamma^+x(z)=\big(I+(z-z_0)(\widehat A-z)^{-1}\big) \Gamma Q(z_0)^{-1}\Gamma^+ x(z).
$$ 
In order to rewrite $\Gamma Q(z_0)^{-1}\Gamma^+$ we  use \eqref{gammahat1} and an identity as \eqref{0} for the relation $\widehat A$
\begin{eqnarray*}
\Gamma Q(z_0)^{-1}\Gamma^+ &=& \widehat \Gamma Q(z_0)Q(z_0)^{-1}Q(\overline{z_0})\widehat\Gamma^+=\widehat \Gamma_{z_0} Q(\overline{z_0})\widehat\Gamma_{z_0}^+\\
&=&\big(I+(z_0-\overline{z_0})(\widehat A-z_0)^{-1}\big)\widehat \Gamma_{\overline z_0} Q(\overline{z_0})\widehat\Gamma_{z_0}^+.
\end{eqnarray*}
Applying the resolvent identity and  using  relation  \eqref{Ahat1}  we  obtain
$$
-\widehat\Gamma_z\Gamma^+x(z)={\big(I+(z-\overline{z_0})(\widehat A-z)^{-1}\big)}\Big((A-\overline{z_0})^{-1}-(\widehat A-\overline{z_0})^{-1}\Big)x(z)
$$
With \eqref{xz} and again the resolvent identity for $\widehat A$ this further equals
$$
\frac1{z-\overline{z_0}}\Big(I+(z-\overline{z_0})(\widehat A-z)^{-1}\Big)\Big(x(z)+(z-\alpha)^\ell(A-\overline{z_0})^{-1}x_{\ell-1}\Big)-(\widehat A-z)^{-1}x(z)
$$
\begin{eqnarray*}
 &=&
\frac1{z-\overline{z_0}}\Big(x(z)+(z-\alpha)^\ell\big(I+(z-\overline{z_0})(\widehat A-z)^{-1}\big) (A-\overline{z_0})^{-1}x_{\ell-1}\Big)
\end{eqnarray*}
and hence the lemma is proved.
\end{proof}

\begin{proof}(Theorem \ref{main})
 We start by rewriting $\vec\eta(z)$ from \eqref{eta}, where, in particular,  \eqref{only} is used   in the fourth equality
\begin{eqnarray}
\vec\eta(z) &=& (z-\overline{z_0})Q(z)^{-1}\Gamma^+x(z)\nonumber\\
&=& (z-\overline{z_0})\Big(Q(\overline z_0)^{-1}-(z-\overline{z_0})\widehat\Gamma^+\widehat\Gamma_z\Big)\Gamma^+ x(z)\nonumber\\
&=& (z-\overline{z_0})\widehat\Gamma^+\Big(-x(z)-(z-\overline{z_0})\widehat\Gamma_z\Gamma^+x(z)\Big)\nonumber\\
&=& (z-\overline{z_0})(z-\alpha)^\ell\widehat\Gamma^+\Big(I+(z-\overline{z_0})(\widehat A-z)^{-1}\Big)(A-\overline{z_0})^{-1}x_{\ell-1}\label{eta2}
\end{eqnarray}
The crucial observation is that due to the assumption that $\alpha$ is not a generalized zero of $Q$, this is  $\alpha\not\in\sigma_p(\widehat A)$,  it holds as $z \hat\to \alpha$
$$
 (z-\alpha)(\widehat A-z)^{-1}\overset{strongly}{\longrightarrow}0
$$
and, moreover, for $ j<\ell$
\begin{equation*}\label{der}
\frac{d^j\,\,}{dz^j}\left((z-\alpha)^\ell\big(\widehat A-z\big)^{-1}\right)\overset{strongly}{\longrightarrow} 0.
\end{equation*}
In what follows we show that $\vec\eta$ satisfies the conditions from Definitions \ref{def1} and \ref{def2}.
\begin{itemize}
\item[] {\bf (A, D)} Formula \eqref{eta2}  and the above observations imply
 $\vec\eta^{\,(j)}\overset{strongly}{\longrightarrow} \vec0$ for $  j<\ell$.
\item[] {\bf (B, E\,)}
The definition of $\vec\eta$ gives
$$
\hspace*{1cm} \Big(Q(z)\vec\eta(z)\Big)^{(j)}= \Big((z-\overline{z_0})\Gamma^+\big({\textstyle x_0+(z-\alpha)x_1+\ldots +(z-\alpha)^{\ell-1}x_{\ell-1}}\big)\Big)^{(j)}
$$
and hence the limits in {\bf (E)} exist even strongly, in particular,
$$
Q(z)\vec\eta(z)=(z-\overline z_0)\Gamma^+x(z)\to(\alpha-\overline{z_0})\Gamma^+x_0, \quad \text{ as }z \hat\to \alpha.
$$
Let us assume $\Gamma^+x_0=\vec 0$. As $x_0$ is an eigenvector of $A$ at $\alpha$ this implies 
$$
\big[\big(I+(z-z_0)(A-z)^{-1}\big)\Gamma\vec h, x_0\big]=\frac{\alpha-z_0}{\alpha-z}(\vec h,\Gamma^+x_0)=0
$$
for all $\vec h\in\mathcal H$ and $z\in\varrho(A)$ and minimality would hence imply $x_0=0$. This contradicts the fact that $x_0$ is an eigenvector and thus $\Gamma^+x_0\neq\vec0$.\vspace{2mm}

\item[] {\bf (C$_s$, F$_s$)} In order to show the existence of the limits we note that 
$$
\left(\frac{Q(z)-Q(\overline w)}{z-\overline w}\vec\eta(z),\vec\eta(w)\right)=[\Gamma_z\vec\eta(z),\Gamma_w\vec\eta(w)]
$$
and rewrite $\Gamma_z\vec\eta(z)$ with the help of \eqref{gammahat1} and  \eqref{only}
\begin{eqnarray*}
\Gamma_z\vec\eta(z) &=& -(z-\overline{z_0})\widehat\Gamma_z Q(z)Q(z)^{-1}\Gamma^+x(z)= -(z-\overline{z_0})\widehat\Gamma_z \Gamma^+x(z)\\[1mm]
&=& x(z)+(z-\alpha)^\ell\big(I+(z-\overline{z_0}) (\widehat A-z)^{-1}\big)(A-\overline{z_0})^{-1} x_{\ell-1}
\end{eqnarray*}
Hence $\Gamma_z\vec\eta(z)$  can be written as 
\begin{equation}\label{gammah}
 \Gamma_z\vec\eta(z)=x(z)+h_\ell(z),
\end{equation}
where  for  $0\leq j<\ell$
$$
\frac{d^j\,}{dz^j} h_\ell(z)\overset{strongly}{\longrightarrow}0 \quad \text{ as } z\hat\to\alpha.
$$
This implies
$$
[\Gamma_z\vec\eta(z),\Gamma_w\vec\eta(w)]=[x(z)+h_\ell(z),x(w)+h_\ell(w)]
$$
and hence the limits in \textbf{(C$_s$), (F$_s$)} exist.
\end{itemize}
This finishes the proof.
\end{proof}

\begin{proof}(Corollary \ref{back})
 Formula \eqref{gammah} implies   for $0\leq j<\ell$
$$
\frac1{j!}\Big(\Gamma_z\vec\eta(z)\Big)^{(j)}=\frac1{j!}\big(x(z)+h_\ell(z)\big)^ {(j)}\to x_j\quad \text{ as } z\hat\to\alpha{{,}}
$$
and 
\begin{eqnarray*}
 && \lim\limits_{z,w\hat\to\alpha}\frac1{i!j!}\frac{d^{i+j}}{dz^i\,d\overline w^j}\left(\frac{Q(z)-Q(w)^*}{z-\overline w}\vec\eta(z),\vec\eta(w)\right)\\
&=& \lim\limits_{z,w\hat\to\alpha}\frac1{i!j!}\frac{d^{i+j}}{dz^i\,d\overline w^j}[\Gamma_z\vec\eta(z),\Gamma_w\vec\eta(w)]\\
&=& \lim\limits_{z,w\hat\to\alpha}\frac1{i!j!}\frac{d^{i+j}}{dz^i\,d\overline w^j}
[x(z)+h_\ell(z),x(w)+h_\ell(w)]\\
&=&  \lim\limits_{z,w\hat\to\alpha}\frac1{i!j!}\frac{d^{i+j}}{dz^i\,d\overline w^j}
[x(z),x(w)]=[x_i,x_j],
\end{eqnarray*}
which finishes the proof.
\end{proof}

\begin{remark}
Note that in the last statement of the corollary also   pole cancellation functions corresponding to different Jordan chains can be employed. This makes it possible to describe all inner products in the algebraic eigenspace. 
\end{remark}

In the main theorem it was shown that given a Jordan chain $x_0,x_1,\ldots,x_{\ell-1}$ the pole cancellation function $\vec\eta(z)$ is of order at least $\ell$. So the question arises how the order of $\vec\eta$ is related to the maximality of the Jordan chain. We summarize our observations in the following corollary.
\begin{corollary}\label{cororder}
Let the pole cancellation function  $\vec\eta$ be given as in Theorem $\ref{main}$.
\begin{enumerate}
 \item If $x_0,\ldots,x_{\ell-1}$ is a maximal Jordan chain of $A$ then $\vec\eta$ has order $\ell$.
\item Conversely, if $\vec\eta$ has order $\ell$, then $x_0,\ldots,x_{\ell-1}$ need not be maximal.
\item If $x_0,\ldots,x_{\ell-1}$ is not maximal then  the order of $\eta$ is $\ell$ or larger than $\ell$ and there are examples for both situations.
\end{enumerate}
\end{corollary}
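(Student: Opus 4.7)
The three claims are best handled in the order (1), (3), (2), since (2) will fall out immediately from (3).

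For \textit{(1)} I would argue by contradiction, using Theorem \ref{theorem-suff} as a partial converse to Theorem \ref{main}. Assume $x_0,\ldots,x_{\ell-1}$ is maximal but that $\vec\eta$ has order $\ell'>\ell$. By Theorem \ref{main} this $\vec\eta$ is in fact a strong pole cancellation function, so it satisfies \textbf{(D)}, \textbf{(E)}, \textbf{(F$_s$)} up to index $\ell'-1$. Applying Theorem \ref{theorem-suff}\eqref{four} yields a Jordan chain $y_0,\ldots,y_{\ell'-1}$ of $A$ at $\alpha$ obtained as strong limits $y_j=\lim_{z\hat\to\alpha}\tfrac{1}{j!}(\Gamma_z\vec\eta(z))^{(j)}$. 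On the other hand, Corollary \ref{back} identifies exactly these limits: $y_j=x_j$ for $0\leq j<\ell$. In particular $y_\ell$ would extend the original Jordan chain, contradicting its maximality; hence the order equals $\ell$ exactly.

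For \textit{(3)}, Theorem \ref{main} already gives ``order $\geq \ell$'' unconditionally, so only the two stated possibilities remain. That both actually occur must be read off from concrete examples, and for this I would refer to the cases collected in subsection \ref{examples}. The direction order $>\ell$ is the easier one: if $x_0,\ldots,x_{\ell-1}$ is a proper prefix of a longer chain $x_0,\ldots,x_\ell,\ldots$ that is genuinely visible through the realization, formula \eqref{eta2} shows that the extra factor $(z-\alpha)^\ell$ multiplied by the regular quantity $(I+(z-\overline{z_0})(\widehat A-z)^{-1})(A-\overline{z_0})^{-1}x_{\ell-1}$ (regularity of $(\widehat A-z)^{-1}$ at $\alpha$ coming from the assumption that $\alpha$ is not a generalized zero) automatically produces additional vanishing of $\vec\eta$. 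The main obstacle is the converse direction, order $=\ell$ with a non-maximal chain: one has to arrange, in a multi-dimensional situation, that the $\ell$-th derivative of $\vec\eta$ at $\alpha$ does not vanish even though the chain continues inside $A$. Concretely this forces one to construct $\Gamma$ so that the continuation of the chain is not ``picked up'' by $\widehat\Gamma^+(A-\overline{z_0})^{-1}x_{\ell-1}$ in \eqref{eta2}; matrix examples realizing this are exactly what is collected in subsection \ref{examples}.

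Finally, \textit{(2)} is now immediate: an example from \textit{(3)} with non-maximal chain and associated $\vec\eta$ of order exactly $\ell$ is precisely a witness that order $\ell$ does not force maximality of the underlying chain.
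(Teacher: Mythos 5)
Your proof follows essentially the paper's own route: part (1) is exactly the intended use of Corollary \ref{back} (the paper's proof of (1) is literally ``follows from Corollary \ref{back}'', which you have correctly spelled out as a contradiction argument via Theorem \ref{theorem-suff}), and parts (2)--(3) are, as in the paper, settled by concrete examples, namely Example \ref{ex1}(a) (non-maximal chain, order exactly $\ell$, which proves (2) and one half of (3)) and Example \ref{ex1}(b) (non-maximal chain, order $>\ell$). Two caveats. First, in (1): if the order of $\vec\eta$ were $\ell'>\ell$, you only know {\bf (D)}, {\bf (E)}, {\bf (F)} for $j<\ell'$, while {\bf (F$_s$)} is guaranteed by Theorem \ref{main} only for $j<\ell$; so for the indices $j\geq\ell$ you should invoke Theorem \ref{theorem-suff}\,(3) (weak limits already yield the longer Jordan chain) rather than (4) --- the conclusion is unaffected, since Corollary \ref{back} identifies the first $\ell$ limits with $x_0,\dots,x_{\ell-1}$ and the extension contradicts maximality. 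Second, your heuristic for (3) is inverted: in Example \ref{ex1}(b) the order jumps above $\ell$ precisely because the continuation $x_1$ lies in $\ker\Gamma_2^+$, i.e.\ is \emph{not} picked up by the realization, so that $\vec\eta$ built from the short chain coincides with the one built from the full chain and Theorem \ref{main} applied to the longer chain gives the higher order; when the continuation is fully visible (Example \ref{ex1}(a), where $\Gamma_1=I$) the order stays at $\ell$. A longer chain ``genuinely visible through the realization'' therefore does \emph{not} automatically produce extra vanishing of $\vec\eta$ --- the opposite of what you wrote. Since you ultimately defer to the examples, this does not invalidate the proof, but the stated mechanism would steer you toward the wrong constructions.
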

\begin{proof}
 (1) follows from Corollary \ref{back}. Example \ref{ex1}(a)  shows (2) and examples for (3) are given in Example \ref{ex1}(a) and (b), respectively.
\end{proof}

In Theorem \ref{main} there are still   restrictive assumptions, namely that $Q$ is regular and  $\alpha$ is not a generalized zero. These restrictions are removed in the following theorem. 

\begin{theorem}\label{final}
Let $Q\in\mathcal N_\kappa(\mathcal H)$ be given  and assume that   $\alpha\in\mathbb R$ is a generalized pole of $Q$ such that the representing relation $A$ has a Jordan chain at $\alpha$ of length $\ell$.

Then there exists a strong pole cancellation function $\vec\eta(z)$ of order at least $\ell$ of the form
$$
\vec\eta(z)=\big(Q(z)+S\big)^{-1}\vec p(z),
$$
where $S=S^*\in\mathcal L(\mathcal H)$ and $\vec p(z)$ is an $\mathcal H$-valued polynomial of degree $<\ell$.
\end{theorem}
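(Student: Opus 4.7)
The plan is to reduce to Theorem \ref{main} by adding a suitable self-adjoint perturbation $S = \lambda I$ with $\lambda \in \mathbb R$ to $Q$. The enabling observation is that the Nevanlinna kernel is invariant under such a shift, $\bigl(\widetilde Q(z) - \widetilde Q(w)^*\bigr)/(z-\overline w) = \bigl(Q(z) - Q(w)^*\bigr)/(z - \overline w)$ where $\widetilde Q := Q + \lambda I$. Hence $\widetilde Q \in \mathcal N_\kappa(\mathcal H)$ and shares with $Q$ the same minimal realization $(A, \Gamma, z_0)$ -- only the constant $Q(z_0)^*$ in \eqref{rep} is replaced by $Q(z_0)^* + \lambda I$. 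In particular $\alpha$ remains a generalized pole of $\widetilde Q$ with the identical Jordan chain $x_0,\dots,x_{\ell-1}$.

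Next I would choose $\lambda$ so that $\widetilde Q$ satisfies both hypotheses of Theorem \ref{main}: (a) $\widetilde Q$ is regular, and (b) $\alpha$ is not a generalized zero of $\widetilde Q$. For (a), fix any $\gamma \in \mathbb C^+ \cap \hol(Q)$; then $Q(\gamma) + \lambda I$ is boundedly invertible whenever $-\lambda \notin \sigma(Q(\gamma))$, which excludes only a compact subset of $\mathbb R$. For (b), I must ensure $\alpha \notin \sigma_p(\widehat A_\lambda)$, where $\widehat A_\lambda$ is the representing relation of $-(Q + \lambda I)^{-1}$. \emph{This is the main obstacle.} The expectation is that the set of ``bad'' $\lambda$ is at most discrete: as $\lambda$ varies, the relation $\widehat A_\lambda$ depends analytically on the parameter via the resolvent formula $(\widehat A_\lambda - z)^{-1} = (A - z)^{-1} - \Gamma_z(Q(z) + \lambda I)^{-1}\Gamma_{\overline z}^+$, and the rigid constraint that the negative index of the realizing Pontryagin space is the fixed number $\kappa$ limits the proliferation of real eigenvalues clustering at the single point $\alpha$. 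Combining (a) and (b) leaves cofinitely many admissible $\lambda \in \mathbb R$.

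With such $\lambda$ fixed and $S := \lambda I$, Theorem \ref{main} applied to $\widetilde Q$ at $\alpha$ yields the strong pole cancellation function
$$
\widetilde\eta(z) = (z - \overline{z_0})(Q(z) + S)^{-1}\Gamma^+\bigl(x_0 + (z - \alpha)x_1 + \dots + (z - \alpha)^{\ell-1}x_{\ell-1}\bigr)
$$
of order at least $\ell$. The scalar prefactor $(z - \overline{z_0})$ is bounded and bounded away from zero in a neighbourhood of $\alpha$ (as $\overline{z_0} \notin \mathbb R$), so it may simply be dropped without affecting any of the defining conditions of a pole cancellation function. This gives the advertised form $\vec\eta(z) := (Q(z) + S)^{-1}\vec p(z)$ with $\vec p(z) := \Gamma^+\bigl(x_0 + (z-\alpha)x_1 + \dots + (z-\alpha)^{\ell-1}x_{\ell-1}\bigr)$ an $\mathcal H$-valued polynomial of degree $< \ell$.

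Finally, I would verify that $\vec\eta$ is a strong pole cancellation function of $Q$ (not only of $\widetilde Q$) of order at least $\ell$. Properties \textbf{(A), (D)} involve $\vec\eta$ alone and hence transfer verbatim. For \textbf{(B), (E)}, I write $Q(z)\vec\eta(z) = \widetilde Q(z)\vec\eta(z) - S\vec\eta(z)$ and differentiate: since $\vec\eta^{(j)} \to 0$ strongly for $0 \le j < \ell$ and $(\widetilde Q\vec\eta)^{(j)}$ converges by Theorem \ref{main}, so does $(Q\vec\eta)^{(j)}$, with the identical limit, nonzero for $j=0$. For \textbf{(C$_s$), (F$_s$)}, the identity $Q(z) - Q(w)^* = \widetilde Q(z) - \widetilde Q(w)^*$ makes the relevant expressions literally the same for $Q$ and $\widetilde Q$, so convergence and limit values transfer with no change. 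This completes the proof, modulo the discreteness argument in Step (b), which is the only genuinely non-trivial input.
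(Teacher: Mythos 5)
Your overall strategy is the same as the paper's: perturb $Q$ by a self-adjoint $S$ so that Theorem \ref{main} applies to $Q+S$, and then transfer the pole cancellation properties back to $Q$. The transfer itself you do correctly and in more detail than the paper (which dismisses it as easy): \textbf{(A)}, \textbf{(D)} involve only $\vec\eta$; \textbf{(B)}, \textbf{(E)} follow from $Q\vec\eta=(Q+S)\vec\eta-S\vec\eta$ together with $\vec\eta^{(j)}\to 0$; and \textbf{(C$_s$)}, \textbf{(F$_s$)} follow from the invariance of the Nevanlinna kernel under a self-adjoint shift. Your observation that $Q+\lambda I$ ($\lambda\in\mathbb R$) has the same minimal realization $(A,\Gamma,z_0)$, hence the same generalized pole $\alpha$ and the same Jordan chain, is also correct, as is dropping the harmless factor $(z-\overline{z_0})$.

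The genuine gap is exactly the step you flag yourself: the existence of an admissible perturbation, i.e.\ of a self-adjoint $S$ such that $Q+S$ is regular \emph{and} $\alpha$ is not a generalized zero of $Q+S$. This is the only nontrivial input of the whole proof, and your argument for it is a heuristic, not a proof. The paper does not prove it either, but it cites a concrete source: the proof of Theorem 3.7 in \cite{Lu2}, where such an $S$ is constructed. Your discreteness reasoning does not hold up as stated: for each fixed $\lambda$ the negative index $\kappa$ bounds only the nonpositive part of the spectral data of $\widehat A_\lambda$, and a generalized zero of $Q+\lambda I$ at $\alpha$ can perfectly well be of positive type, so ``$\kappa$ limits the proliferation of real eigenvalues at $\alpha$'' does not rule out that $\alpha\in\sigma_p(\widehat A_\lambda)$ for a large (a priori even uncountable) set of $\lambda$; some actual argument (analytic dependence plus an identity-theorem or factorization argument, or simply the construction in \cite{Lu2}) is needed, and in the operator-valued setting with possibly embedded eigenvalues this is not routine. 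Note also that you restrict to scalar shifts $S=\lambda I$, which is a strictly stronger existence claim than the theorem requires and than what \cite{Lu2} provides; nothing in your sketch justifies that scalar shifts suffice. With the citation to \cite{Lu2} (or an honest proof of the existence of $S$) inserted, the rest of your argument is sound and matches the paper's proof.
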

\begin{proof}
It follows from the proof of Theorem 3.7 in \cite{Lu2} that for every $Q\in\mathcal N_\kappa(\mathcal H)$ and $\alpha\in\mathbb R$ there exists a self-adjoint $S\in\mathcal L(\mathcal H)$ such that the function $Q(z) +S$ is regular and  does not have $\alpha$ as   a generalized zero. It is easy to check that a pole cancellation function of $Q$ at $\alpha$ is a pole cancellation function of $Q+S$ and conversely (and all the limits in the definitions coincide). Then we choose $\vec\eta$ as the pole cancellation function constructed in Theorem \ref{main} for $Q+S$.
\end{proof}

\begin{remark}
{  By definition the point $\infty$ is a generalized pole of $Q$ if and only if $0$ is a generalized pole of the function $\widetilde Q(z):=Q(-\frac1z)$ (with the same multiplicities).  

If we then want to express  Definition \ref{def1} in terms of $Q$ rather than $\widetilde Q $ then  {\bf (A)} and {\bf (B)} remain unchanged, whereas the quotient in {\bf (C$_s$)} (and {\bf (C)} accordingly) has to be substituted by 
$$
z\overline w\left(\dfrac{Q(z)-Q(w)^*}{z-\overline w}\vec\eta(z),\vec\eta(w)\right).
$$ 
However, in Definition \ref{def2} all terms including derivatives become more involved, since inner derivatives from the function $-\frac1z$ come into play. 
}
\end{remark}

We want to point out that the assumption in Theorem \ref{main} is not only technical. 
Indeed, if \nolinebreak $\alpha$ is a generalized zero of $Q$ then  $\vec\eta$ from Theorem \ref{main} need not be a pole cancellation function or if it is, its order can be less than $\ell$ from the construction, see Example \ref{ex3}.
Moreover, in this case it might  happen that it is not possible to choose a pole cancellation function of the form $\vec\eta(z)=Q(z)^{-1}\vec p(z)$, with a suitable polynomial  \nolinebreak$p$. See Example \ref{ex2} for an illustration of this fact.
We finish this section with a historic remark.

\begin{remark}\label{rem2}
  Several results  have originally been proven for generalized zeros (not poles), but they immediately can be translated for generalized poles, as generalized zeros of $Q$ by definition are generalized poles of the inverse function $\widehat Q$. 

The first result of an analytic characterization  for non-scalar functions can be found in  \cite{BL}. There a generalized zero is characterized by the existence of a function $\phi$ with properties that in our notation correspond to the fact that $Q(z)^{-1}\phi(z)$ is a pole cancellation function of $Q$, but it is even mentioned that in the general situation they cannot characterize the multiplicity of the generalized pole. In other papers assumptions are made that guarantee that the algebraic eigenspace is not degenerate and hence not ortho-complemented, namely \cite{B} deals with meromorphic functions, not necessarily generalized Nevanlinna functions, whereas in \cite{DLS93} embedded eigenvalues are considered, but the further assumptions even rule out the existence of Jordan chains of length $>1$. 

In \cite{Lu2} a different approach (via the factorization of $Q$) enables to avoid such assumptions, but the drawback there is that a pole cancellation function can be constructed only for the non-positive part of a Jordan chain (this might be only half the chain)  and, moreover, this construction is quite complicated and non-constructive. 

\end{remark}

{ \subsection{Generalized zeros} \label{zeros}
By applying the above results to the inverse function corresponding characterizations for generalized zeros can be obtained as well. For the convenience of the reader we give some more details here. 

\begin{definition}
A point $\beta\in\mathbb C\cup\{\infty\}$ is called a {\it generalized zero}  of $Q\in\mathcal N_\kappa(\mathcal H)$ if it is a generalized pole of the function $\widehat Q(z)=-Q(z)^{-1}$. 
\end{definition}
Instead of pole-cancellation functions one works here with so-called root functions.

\begin{definition}\label{def11}
Let $Q\in\mathcal N_\kappa(\mathcal H)$ and $\beta\in\mathbb R$ be given. Denote by $\mathcal U_\beta$ a neighbourhood of $\beta$. A function $\vec\xi:\mathcal U_\beta\cap\mathbb C^+\to\mathcal H$ is called \textit{root function of $Q$ at \nolinebreak$\beta$} of \textit{order}  $\ell\in\mathbb N$ if $\ell$ is the maximal number such that for all $0\leq j <\ell$ it holds:\vspace{1mm}
\begin{itemize}
\item[] {\bf (K) } $\text{w-}\!\lim\limits_{z\hat\to\beta}\big(\vec\xi(z)\big)^{(j)}=:\vec\xi_j$ exist and $\vec\xi_0\neq\vec0$,
\item[] {\bf (L) }  $\text{w-}\!\lim\limits_{z\hat\to\beta}\big(Q(z)\vec\xi(z)\big)^{(j)}=\vec0$,\vspace{1mm}
\item[] {\bf (M) } $\dfrac{d^{2j}}{dz^j\,d\overline z^j}\left(\dfrac{Q(z)-Q(z)^*}{z-\overline z}\vec\xi(z),\vec\xi(z)\right)$ is bounded as $z\hat\to\beta$.\vspace{2mm}
\end{itemize}
It is called {\it strong}  pole cancellation function    of  {order}  $\ell\in\mathbb N $, if   additionally also the following property is satisfied\vspace{2mm}
\begin{itemize}
\item[] {\bf (M$_{s}$) } $\lim\limits_{z,w\hat\to\beta}\dfrac{d^{2j}}{dz^j\,d\overline w^j}\left(\dfrac{Q(z)-Q(w)^*}{z-\overline w}\vec\xi(z),\vec\xi(w)\right)$ exist for all $0\leq j <\ell$.
\end{itemize}
\end{definition}

Note that a vector function $\vec\xi(x)$ is a (strong) root function of $Q$ at $\beta$ if and only if $\vec\eta(z):=Q(z)\vec\xi(z)$ is a (strong) pole cancellation function of $\widehat Q$ at $\beta$ and, moreover, the respective orders coincide. Hence all the above statements can be rewritten in terms of generalized zeros and root functions. We restrict the presentation to the following theorem.

\begin{theorem}
Let  $Q\in\mathcal N_\kappa(\mathcal H)$  and $\beta\in\mathbb R$ be given. Then the root functions of $Q$  of order at least $\ell$ at $\beta$  are in one-to-one correspondence to the Jordan chains of length $\ell$ of the representing relation of the function $\widehat Q$. In particular,  the root function can always be chosen in the form $\vec\xi(z)=(I+SQ(z))^{-1}\vec p(z)$, where $S=S^*$ is a constant and $\vec p(z)$ an $\mathcal H$-valued polynomial. 
\end{theorem}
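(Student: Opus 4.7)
The strategy is to reduce the statement to the results of Section \ref{cc} by passing to the inverse function. The bridge is the observation immediately preceding the theorem: $\vec\xi$ is a (strong) root function of $Q$ at $\beta$ of order $\ell$ if and only if $\vec\eta(z) := Q(z)\vec\xi(z)$ is a (strong) pole cancellation function of $\widehat Q$ at $\beta$ of the same order. To justify this rigorously I would first derive the kernel identity
\[
N_{\widehat Q}(z,w) = \widehat Q(w)^*\, N_Q(z,w)\, \widehat Q(z),
\]
a direct consequence of $Q(z)\widehat Q(z) = \widehat Q(w)^*Q(w)^* = -I$. Pairing with $\vec\eta(z) = Q(z)\vec\xi(z)$ and $\vec\eta(w) = Q(w)\vec\xi(w)$ and using $\widehat Q Q = -I$ twice, this collapses to
\[
\bigl(N_{\widehat Q}(z,w)\vec\eta(z), \vec\eta(w)\bigr) = \bigl(N_Q(z,w)\vec\xi(z), \vec\xi(w)\bigr),
\]
so the kernel conditions \textbf{(M)}, \textbf{(M$_s$)} for $\vec\xi$ are \emph{identical} to \textbf{(F)}, \textbf{(F$_s$)} for $\vec\eta$. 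The weak-limit conditions \textbf{(K)}, \textbf{(L)} translate into \textbf{(E)}, \textbf{(D)} via the Leibniz rule applied to $\vec\xi = -\widehat Q\vec\eta$ and $\vec\eta = Q\vec\xi$, and the order $\ell$ is preserved.

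\textbf{Existence and the explicit form.} Given a Jordan chain $x_0, \ldots, x_{\ell-1}$ of length $\ell$ of the representing relation $\widehat A$ of $\widehat Q$ at $\beta$, I apply Theorem \ref{final} to $\widehat Q$ to obtain a strong pole cancellation function of $\widehat Q$ of order at least $\ell$ of the form
\[
\vec\eta(z) = \bigl(\widehat Q(z) + T\bigr)^{-1}\vec q(z),
\]
where $T = T^* \in \mathcal L(\mathcal H)$ and $\vec q$ is an $\mathcal H$-valued polynomial of degree less than $\ell$. Setting $\vec\xi(z) := -\widehat Q(z)\vec\eta(z) = Q(z)^{-1}\vec\eta(z)$ produces the corresponding strong root function of $Q$. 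To put $\vec\xi$ into the advertised form I use $\widehat Q(z) Q(z) = -I$ to write
\[
Q(z)^{-1}\bigl(\widehat Q(z) + T\bigr)^{-1} = \Bigl(\bigl(\widehat Q(z) + T\bigr)Q(z)\Bigr)^{-1} = \bigl(-I + TQ(z)\bigr)^{-1},
\]
so $\vec\xi(z) = \bigl(-I + TQ(z)\bigr)^{-1}\vec q(z)$. Setting $S := -T$ and $\vec p := -\vec q$ gives exactly $\vec\xi(z) = (I + SQ(z))^{-1}\vec p(z)$.

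\textbf{Converse and one-to-one correspondence.} Conversely, if $\vec\xi$ is a strong root function of $Q$ of order $\ell$, then $\vec\eta = Q\vec\xi$ is a strong pole cancellation function of $\widehat Q$ of order $\ell$, so Theorem \ref{theorem-suff}(3)--(4) applied to $\widehat Q$ produces a Jordan chain of length $\ell$ of $\widehat A$ at $\beta$; Corollary \ref{back} applied to $\widehat Q$ recovers this chain from $\vec\eta$ (and therefore from $\vec\xi$), so the two constructions are mutually inverse and the correspondence is bijective. The chief technical obstacle is the bookkeeping in the first paragraph: since neither $Q$ nor $\widehat Q$ need be holomorphic at $\beta$, the higher-order Leibniz manipulations for the non-tangential limits of $(Q\vec\xi)^{(j)}$ and $(\widehat Q\vec\eta)^{(j)}$ cannot rely on a naive Taylor expansion at $\beta$ and must be handled using the explicit rational form of $\vec\eta$ supplied by Theorem \ref{final}. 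Once the kernel identity and these limit translations are in place, the remainder of the argument is purely algebraic.
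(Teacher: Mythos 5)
Your proposal is correct and takes essentially the same route as the paper, which establishes this theorem simply by observing that $\vec\xi$ is a (strong) root function of $Q$ at $\beta$ of order $\ell$ exactly when $\vec\eta(z)=Q(z)\vec\xi(z)$ is a (strong) pole cancellation function of $\widehat Q$ at $\beta$ of the same order, and then invoking Theorem \ref{theorem-suff} and Theorem \ref{final} for $\widehat Q$. Your kernel identity and the algebraic step $Q(z)^{-1}\bigl(\widehat Q(z)+T\bigr)^{-1}=\bigl(-I+TQ(z)\bigr)^{-1}$ merely make explicit what the paper leaves as a remark, and your closing concern about Leibniz/Taylor manipulations is moot since $\widehat Q(z)Q(z)\equiv -I$ renders the translation of the limit conditions immediate.
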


Note, however, that it can happen that it is not possible to choose $\vec \xi$ as a polynomial, that is $S=0$, see Example \ref{ex2}. 
}
\subsection{Examples} \label{examples}

Here we collect the examples that were referred to in the text. For simplicity we use the simpler form of $\vec\eta$ mentioned in Remark \ref{simple}. Note that all realizations given here are chosen minimal.
\begin{example}\label{ex1} 
Let $\mathcal K=\mathbb C^2$ and define with the Gram matrix 
$G=\begin{pmatrix} 0 & 1 \\ 1 & 0 \end{pmatrix}$ the  inner product in $\mathcal K $. Then the operator $A=\begin{pmatrix} 0 & 1 \\ 0 & 0 \end{pmatrix}$ is self-adjoint in $\mathcal K$.
\begin{enumerate}
 \item[(a)] With $\Gamma_{1}:=\begin{pmatrix} 1 & 0 \\ 0 & 1 \end{pmatrix}$ define the $\mathcal N_1(\mathbb C^2)$-function
$$
Q_1(z):=\Gamma_{1}^+(A-z)^{-1}\Gamma_{1}=\begin{pmatrix} 0 & -\dfrac1z \\[3mm] -\dfrac1z & -\dfrac1{z^2} \end{pmatrix}.
$$
Choosing the non-maximal Jordan chain consisting of the vector $x_0=\begin{pmatrix} 1 \\ 0  \end{pmatrix}$ only we obtain
$$
\vec\eta_1(z):=Q_1(z)^{-1}\Gamma_{1}^+x_0=\begin{pmatrix} -z \\ 0 \end{pmatrix} \quad \text{ and }\quad Q_1(z)\vec\eta_1(z)=\begin{pmatrix} 0 \\ 1 \end{pmatrix}.
$$
Hence  $\vec\eta_1$ is of order $1$.\vspace{3mm}
\item[(b)] With  $\Gamma_{2}:=\begin{pmatrix} -1 \\ 1 \end{pmatrix}$ define the $\mathcal N_1(\mathbb C)$-function
$$
Q_2(z):=\Gamma_{2}^+(A-z)^{-1}\Gamma_{2}=\frac{2z-1}{z^2}.
$$
 Choosing  again the Jordan chain  $x_0$   we now obtain
$$
\hspace*{1.3cm}\vec\eta_2(z):=Q_2(z)^{-1}\Gamma_{2}^+x_0=\frac{z^2}{2z-1} \text{ and }   Q_2(z)\vec\eta_2(z)=1.
$$
So we found  that the order of $\vec\eta_2$ equals  $2$, even if this pole cancellation function was constructed by a Jordan chain of (non-maximal) length $1$. This can be explained\vspace{2mm} by the fact that also for the Jordan chain $x_0,\,x_1$ with $x_1=\begin{pmatrix} 1 \\ 1 \end{pmatrix}\in\ker\Gamma_{2}^+$ it holds
$$
\hspace*{10mm}\vec\eta_2(z)=Q_2(z)^{-1}\Gamma_{2}^+\big(x_0+(z-\alpha)x_1\big)=Q_2(z)^{-1}\Gamma_{2}^+x_0.
$$
\end{enumerate}
\end{example}

\begin{example}\label{ex2}
 Consider the function \vspace{2mm} 
$$
Q_3(z):=\begin{pmatrix} z^2\sqrt{z} & 1  \\[2mm] 1 & -\dfrac1{z^2\sqrt z} \end{pmatrix}\in\mathcal N_2(\mathbb C^2),
$$
for  which $\alpha=0$ is both a generalized pole and zero since both $Q_3$ and  
$$
\widehat Q_3(z)=\dfrac12\begin{pmatrix}   -\dfrac1{z^2\sqrt z} & -1  \\[2mm] -1 & z^2\sqrt{z}\end{pmatrix}
$$
have a generalized pole at $\alpha=0$.  
 Let us now assume that there exists\vspace{2mm}  a pole cancellation function of $Q_3$ at $\alpha=0$ of the form $\vec \eta(z)=Q_3(z)^{-1}\begin{pmatrix} p_1(z) \\ p_2(z) \end{pmatrix}$ where $p_1$ and $p_2$ are polynomials. They can be written as
\begin{eqnarray*}
 p_1(z)&=&a_0+a_1z+a_2z^2+\mathcal O(z^3)\\
p_2(z) &=& b_0+\mathcal O(z), 
\end{eqnarray*}
as $z\hat\to0$. This gives 
$$
\vec \eta (z)=\frac12\begin{pmatrix} \dfrac{a_0}{z^2\sqrt z}+ \dfrac{a_1}{z\sqrt z}+\dfrac{a_2}{\sqrt z}+\mathcal O(\sqrt z)\\[3mm] a_0+\mathcal O(z) \end{pmatrix}\quad\text{ as }z\hat\to0
$$
and property {\bf (A)} implies $a_0=a_1=a_2=b_0=0$. Hence $Q_3(z)\vec \eta(z)=\vec p(z)\to\vec0$ as $z\hat\to0$ and $\vec \eta$ cannot be a pole cancellation function for $Q_3$. However, it is easy to check that $\alpha=0$ is a generalized pole and 
$$
Q_3(z)^{-1} \begin{pmatrix} -z^2\sqrt z    \\ 1  \end{pmatrix}
$$
is a pole cancellation function for $Q_3$ at $\alpha=0$.

{ A similar calculation shows that the root function $\vec\xi$ of $Q_3$ at $\beta=0$ cannot be a polynomial. }
\end{example}

\begin{example}\label{ex3}
The function 
$$
Q_4(z)=\left(
\begin{array}{cc}
 \dfrac{1}{(z-1) z^2} & \dfrac{1}{z} \\[4mm]
 \dfrac{1}{z} & \dfrac{-2 z-1}{(z+1)^3}
\end{array}
\right) 
$$
is a generalized Nevanlinna function for which $\alpha=0$ is both a pole and zero as also  
$$
\widehat Q_4(z)=\left(
\begin{array}{cc}
 \dfrac{(z-1) (2 z+1)}{z (z+2)} & \dfrac{(z-1) (z+1)^3}{z^2 (z+2)} \\[4mm]
 \dfrac{(z-1) (z+1)^3}{z^2 (z+2)} & -\dfrac{(z+1)^3}{z^3 (z+2)} \\
\end{array}
\right)
$$ has a pole at $0$.

We are going to show that $\vec\eta$ defined as in \eqref{eta} not necessarily is a pole cancellation function. To this end we need a minimal representation of $Q_4$. 
Let the space $\mathcal K_4=\mathbb C^6$,  and let the  Gram matrix $G_4$ and the operator $A_4$  be given as follows
$$G_4=\textstyle\begin{pmatrix} 0 & 1 & 0 & 0 & 0 & 0 \\
 1 & 0 & 0 & 0 & 0 & 0 \\
 0 & 0 & -1 & 0 & 0 & 0 \\
 0 & 0 & 0 & 0 & 0 & -1 \\
 0 & 0 & 0 & 0 & -1 & 0 \\
 0 & 0 & 0 & -1 & 0 & 0 \end{pmatrix}\qquad \text{ and } \qquad  A_4=\begin{pmatrix} 0 & 1 & 0 & 0 & 0 & 0 \\
 0 & 0 & 0 & 0 & 0 & 0 \\
 0 & 0 & 1 & 0 & 0 & 0 \\
 0 & 0 & 0 & -1 & 1 & 0 \\
 0 & 0 & 0 & 0 & -1 & 1 \\
 0 & 0 & 0 & 0 & 0 & -1 \end{pmatrix}.\vspace{2mm}
 $$ 
 Then the operator $A_4$ \vspace{2mm}is self-adjoint in $\mathcal K_4$ and with $\Gamma_4=\begin{pmatrix} \frac{1}{2} & -1 \\
 1 & 0 \\
 1 & 0 \\
 0 & -\frac{1}{2} \\
 0 & -1 \\
 0 & 1 \end{pmatrix}$ the representation   $Q_4(z)=\Gamma_4^+(A_4-z)^{-1}\Gamma_4$ holds.
\\
Any Jordan chain of length ${  2}$ is of the form 
$$
\vec x_0:= \begin{pmatrix} 1 \\ 0 \\0\\0\\0\\0 \end{pmatrix},\quad 
\vec x_1:= \begin{pmatrix} a \\ 1 \\0\\0\\0\\0 \end{pmatrix}  
$$
with ${ a} \in\mathbb C$. Any  function $\vec\eta(z)$ defined as in (3.4) is of the form $Q_4(z)^{-1}\Gamma_4^+\big({ \vec x_0+z \vec x_1 }\big)$, which equals \vspace{1mm}
$$
Q_4(z)^{-1}\Gamma_4^+\big({  \vec x_0+z \vec x_1 }\big)=\begin{pmatrix} -\dfrac{(z-1) \left({ 2 z^2+ (4-4a)z +1  - 2 a  
} \right)}{2 (z+2)} \\[3mm]\dfrac{(z+1)^3 \left({  (2 a +1) z+1-2a
 }\right)}{2 z (z+2)}\end{pmatrix}.
$$

Hence for no Jordan chain of length ${ 2}$ this function  is a pole cancellation function, as already condition \textbf{(A)} {cannot be } satisfied.

\end{example}
In this example the order of the zero $\alpha=0$ is larger than the order of the pole $\alpha=0$.   In a similar way examples can be constructed where $\vec\eta$ still is a pole cancellation function, but its order is reduced. In those examples  the order of the pole is still larger than the order of the zero.

\section{Conclusions}
The results in this paper - in particular - the  construction of the pole cancellation function  of higher order gives a complete answer to the longstanding problem of an analytic characterization of generalized poles including multiplicities. The concrete form of the pole cancellation functions appears to be much simpler than expected.  In the following theorem we summarize the situation.

\begin{theorem}
Let $Q\in\mathcal N_\kappa(\mathcal H)$ and $\alpha\in\mathbb R$ be given. Then the following statements are equivalent:
\begin{itemize}
 \item[{\bf I}] The point  $\alpha$ is a generalized pole of $Q$  and there exists a Jordan chain of the representing relation of length $\ell$. 
 \item[{\bf II}] There exists a pole cancellation function of $Q$ at $\alpha$ of order at least $\ell$.
\item[{\bf III}] There exists a strong pole cancellation function of $Q$ at $\alpha$ of order at least \nolinebreak $\ell$.
\item[{\bf IV}] There exist $S=S^*\in\mathcal L(\mathcal H)$  and an $\mathcal H$-valued polynomial $\vec p(z)$ of degree $<\ell$ such that $(Q(z)+S)^{-1}\vec p(z)$ is a strong pole cancellation function of $Q$ at $\alpha$ of order at least $\ell$.
\end{itemize}

\end{theorem}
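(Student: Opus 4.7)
The plan is to establish the four equivalences as a cycle \textbf{I} $\Rightarrow$ \textbf{IV} $\Rightarrow$ \textbf{III} $\Rightarrow$ \textbf{II} $\Rightarrow$ \textbf{I}, since each arrow is a direct consequence of a result already proven in the paper. The heavy lifting has all been done; this final theorem is a bookkeeping statement that packages Theorems \ref{theorem-suff}, \ref{main}, and \ref{final} into a single clean characterization.

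For \textbf{I} $\Rightarrow$ \textbf{IV}, I would apply Theorem \ref{final} directly: given a Jordan chain $x_0,\ldots,x_{\ell-1}$ of the representing relation $A$ at $\alpha$, that theorem produces a self-adjoint $S \in \mathcal L(\mathcal H)$ and an $\mathcal H$-valued polynomial $\vec p(z)$ of degree $<\ell$ such that $(Q(z)+S)^{-1}\vec p(z)$ is a strong pole cancellation function of $Q$ at $\alpha$ of order at least $\ell$. Concretely, $S$ is chosen so that $Q+S$ is regular and does not have $\alpha$ as a generalized zero (possible by the argument cited from \cite{Lu2}), and then the polynomial $\vec p(z) = (z-\overline{z_0})\Gamma^+\bigl(x_0+(z-\alpha)x_1+\cdots+(z-\alpha)^{\ell-1}x_{\ell-1}\bigr)$ arising from Theorem \ref{main} (applied to $Q+S$) does the job.

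The implications \textbf{IV} $\Rightarrow$ \textbf{III} and \textbf{III} $\Rightarrow$ \textbf{II} are essentially tautological. The former is immediate, since \textbf{IV} exhibits a specific strong pole cancellation function of the desired order. The latter holds because the conditions \textbf{(F$_s$)} (respectively \textbf{(C$_s$)}) imply \textbf{(F)} (respectively \textbf{(C)}) by specializing $w$ to $z$, so any strong pole cancellation function of order $\ell$ is automatically a pole cancellation function of order $\ell$ in the sense of Definitions \ref{def1} and \ref{def2}.

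Finally, for \textbf{II} $\Rightarrow$ \textbf{I}, I would invoke Theorem \ref{theorem-suff}\eqref{three}: a pole cancellation function of order $\ell$ (satisfying \textbf{(A)}-\textbf{(F)}) produces vectors $x_0,\ldots,x_{\ell-1}$ obtained as weak limits of $\frac{1}{j!}\bigl(\Gamma_z\vec\eta(z)\bigr)^{(j)}$, which form a Jordan chain of the representing relation $A$ at $\alpha$, with $x_0\neq 0$ an eigenvector; thus $\alpha$ is a generalized pole and the chain has length $\ell$. There is no real obstacle here — the only thing to note is that the cycle is genuinely closed and that \textbf{IV} (the polynomial/resolvent form) is actually equivalent to, not merely sufficient for, the more abstract conditions \textbf{II} and \textbf{III}; this is the substantive content being advertised, and it is exactly what the combination of Theorems \ref{theorem-suff} and \ref{final} delivers.
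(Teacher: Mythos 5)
Your proposal is correct and takes essentially the same route as the paper: the summary theorem is precisely the packaging of Theorem \ref{final} (for \textbf{I}$\Rightarrow$\textbf{IV}), the immediate implications \textbf{IV}$\Rightarrow$\textbf{III}$\Rightarrow$\textbf{II}, and Theorem \ref{theorem-suff} (for \textbf{II}$\Rightarrow$\textbf{I}), just as you argue. One cosmetic point: your explicit witness $(z-\overline{z_0})\Gamma^{+}\bigl(x_0+(z-\alpha)x_1+\cdots+(z-\alpha)^{\ell-1}x_{\ell-1}\bigr)$ has degree $\ell$ rather than $<\ell$; drop the factor $(z-\overline{z_0})$ (which the paper notes is not needed) so that the polynomial matches the degree bound stated in \textbf{IV}.
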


The main result of this paper concerns the implications  {\bf I}$\Rightarrow${\bf IV} and {\bf II}$\Rightarrow${\bf I}. But we want to mention that also      {\bf II}$\Rightarrow${\bf III} is new, even for $\ell=1$.

{ 
Pole cancellation functions have already been used as a tool in spectral theory, as recent examples can be mentioned \cite{BLu, BLT}. We expect that the results from the current paper will enhance this, as now also characterizations for multiplicities are available in the most general case and the treatment can be simplified due to the concrete form of a pole cancellation function. 
}

\end{document}